\theoremstyle{plain}
\newtheorem{thm}{Theorem}[section]
\newtheorem{theorem}[thm]{Theorem}
\newtheorem{corollary}[thm]{Corollary}
\newtheorem{lemma}[thm]{Lemma}
\newtheorem{proposition}[thm]{Proposition}
\theoremstyle{remark}
\newtheorem{remark}[thm]{Remark}
\numberwithin{equation}{subsection}
\newcommand{\ZZ}{\mathbb{Z}}
\newcommand{\CC}{\mathbb{C}}
\newcommand{\QQ}{\mathbb{Q}}
\newcommand{\RR}{\mathbb{R}}
\newcommand{\PP}{\mathbb{P}}
\newcommand{\CA}{{\mathcal {A}}}
\newcommand{\CD}{{\mathcal{D}}}
\newcommand{\CL}{{\mathcal {L}}}
\newcommand{\CM}{{\mathcal {M}}}
\newcommand{\CO}{{\mathcal {O}}}
\newcommand{\CX}{{\mathcal {X}}}
\newcommand{\CY}{{\mathcal {Y}}}
\newcommand{\CZ}{{\mathcal {Z}}}
\newcommand{\OL}{{\overline{L}}}
\newcommand{\OD}{{\overline{D}}}
\renewcommand{\d}{\textnormal{d}}
\newcommand{\dc}{\textnormal{d}^{\textnormal{c}}}
\newcommand{\e}{\mathrm{e}}
\newcommand{\an}{{\mathrm{an}}}
\newcommand{\Div}{{\mathrm{Div}}}
\renewcommand{\div}{{\mathrm{div}}}
\newcommand{\Pic}{\mathrm{Pic}}
\newcommand\HDiv{\widehat{\mathrm{Div}}}
\newcommand\HPic{\widehat{\mathrm{Pic}}}
\newcommand{\sm}{{\mathrm{sm}}}
\newcommand{\Spec}{\mathrm{Spec}\,}
\newcommand\TL{\widetilde{L}}
\newcommand\hdiv{\widehat{\mathrm{div}}}
\newcommand{\supp}{{\mathrm{supp}}}
\newcommand\Gm{\mathbb{G}_{\mathrm{m}}}
\newcommand\intt{\mathrm{int}}
\newcommand\TPic{\widetilde{\mathrm{Pic}}}
\newcommand\snef{\mathrm{snef}}
\newcommand\trop{\mathrm{trop}}
\begin{document}
	\title{An integration formula of Chern forms on quasi-projective varieties}
	\author{Ruoyi Guo}
	\maketitle
	\tableofcontents

\section{Introduction}

In complex geometry, a famous formula of Shiing Shen Chern is as follows.
   
\begin{theorem}\label{integration formula in proj complex case}
	Let $X$ be a projective variety over $\CC$ of dimension $n$. Let $\OL_1,\cdots,\OL_n$ be hermitian line bundles on $X$ with underlying line bundles $L_1,\cdots, L_n$. Suppose the metric of each $\OL_i$ is smooth. Then 
	\[
	\int_{X(\CC)}c_1(\OL_1)\cdots c_1(\OL_n)=L_1\cdots L_n.
	\]
	Here the left hand side is the integration of Chern forms; the right hand side is the algebraic intersection number.
\end{theorem}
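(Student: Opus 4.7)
The plan is to reduce first to the case where $X$ is smooth and each $L_i$ is very ample, and then to peel off one Chern form at a time using the Poincar\'e--Lelong formula, inducting on $n=\dim X$.

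\emph{Reduction to $X$ smooth.} Let $\pi\colon \widetilde X\to X$ be a resolution of singularities. On the algebraic side, the projection formula gives $L_1\cdots L_n=(\pi^* L_1)\cdots(\pi^* L_n)$. On the analytic side, $\pi$ restricts to a biholomorphism over the smooth locus $X^{\sm}$, whose complement has complex codimension $\ge 1$ and hence real codimension $\ge 2$ in $X(\CC)$; in particular it has measure zero, so the integrals of the smooth Chern forms over $\widetilde X(\CC)$ and $X(\CC)$ agree. One may therefore assume $X$ is smooth and projective.

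\emph{Reduction to very ample $L_i$.} Every line bundle on a projective variety is a difference of two very ample ones. Both sides of the identity are multilinear in the $\OL_i$ (on the analytic side via $c_1(\OL\otimes\OM)=c_1(\OL)+c_1(\OM)$), so one may assume each $L_i$ is very ample. One also checks that the formula is independent of the smooth metric chosen: two smooth metrics on the same $L$ differ by a factor $e^{-\varphi}$ for a smooth $\varphi$, producing a $dd^c\varphi$ correction to $c_1(\OL)$ which integrates to zero against the remaining smooth closed factors on the compact $X(\CC)$ by Stokes.

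\emph{Induction via Poincar\'e--Lelong.} Using Bertini, choose sections $s_i\in H^0(X,L_i)$ whose divisors $D_i$ are smooth and meet transversally in each multi-intersection. The Poincar\'e--Lelong formula gives the identity of currents
\[
c_1(\OL_n)=\delta_{D_n}+dd^c\bigl[\log\|s_n\|^{-2}\bigr].
\]
Wedging with the smooth closed form $\alpha=c_1(\OL_1)\cdots c_1(\OL_{n-1})$ and integrating over compact $X(\CC)$, the $dd^c$-term vanishes by Stokes for currents, yielding
\[
\int_{X(\CC)} c_1(\OL_1)\cdots c_1(\OL_n)=\int_{D_n(\CC)} c_1(\OL_1|_{D_n})\cdots c_1(\OL_{n-1}|_{D_n}).
\]
Iterating on the smooth projective variety $D_n$ of dimension $n-1$, after $n$ steps one arrives at $\#(D_1\cap\cdots\cap D_n)(\CC)$, which by transversality equals the algebraic intersection number $L_1\cdots L_n$. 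The base case $n=0$ is the tautology $\#X(\CC)=\deg X$.

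\emph{Main obstacle.} The technical core is the Poincar\'e--Lelong step: because $\log\|s_n\|^2$ is genuinely singular along $D_n$, the vanishing of $\int_X dd^c[\log\|s_n\|^{-2}]\wedge\alpha$ is not an elementary Stokes calculation but requires the distributional Poincar\'e--Lelong identity, which is what carries the weight of the proof. The remaining ingredients (resolution of singularities, multilinearity in the line bundles, Bertini and transversality of the chosen divisors) are standard and would be routine to verify in detail.
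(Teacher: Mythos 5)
The paper states Theorem~\ref{integration formula in proj complex case} as Chern's classical formula and offers no proof of it; it is quoted as background, not established in the paper, so there is no internal argument for me to compare you against. Your proposal is the standard proof and is essentially correct: reduce to smooth $X$ by resolution of singularities (projection formula on the algebraic side, exceptional locus of measure zero on the analytic side), reduce to very ample $L_i$ by multilinearity together with metric-independence, and then induct on dimension via Bertini and the distributional Poincar\'e--Lelong identity, killing the exact term by Stokes for currents against the remaining closed smooth factors.

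Two small housekeeping points, neither of which is a real gap. First, a normalization clash: the paper fixes $\dc=\frac{1}{2\pi i}(\partial-\bar\partial)$, for which Poincar\'e--Lelong reads $c_1(\OL_n)=\delta_{D_n}+\d\dc\bigl(-\log\|s_n\|\bigr)$; your formula $c_1(\OL_n)=\delta_{D_n}+\d\dc\log\|s_n\|^{-2}$ is twice the correction term in that convention and implicitly presupposes $\dc=\frac{1}{4\pi i}(\partial-\bar\partial)$. Consistent either way, but it should be stated if this is to sit in the paper. Second, when $X$ is singular, you should make explicit what $\int_{X(\CC)}c_1(\OL_1)\cdots c_1(\OL_n)$ means: the Chern forms are smooth only on $X^{\sm}(\CC)$, and the left-hand side is by definition the integral over $X^{\sm}(\CC)$ (or, equivalently, over a resolution); the resolution step is then not merely a measure-zero observation but the identification that makes the statement well-posed, and convergence of the integral is supplied by pulling back to the compact smooth $\widetilde X(\CC)$.
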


The formula is generalized to the non-archimedean case, i.e. $X$ is a projective variety over a complete non-archimedean valuation field $K$ instead of $\CC$. In \cite{CL06}, Chambert-Loir defines the \emph{Chambert-Loir measure} $c_1(\OL_1)\cdots c_1(\OL_n)$ on $X^\an$ for integrable metrized line bundles $\OL_1,\cdots,\OL_n$ in the sense of \cite{Zha95}. Then he proves a non-archimedean version of Theorem \ref{integration formula in proj complex case} in \cite{CL06}. After that, Gubler generalizes it to formally metrized line bundles in \cite{Gub07}. In \cite{CLD12}, Chambert-Loir and Ducros develop a deep theory of real valued $(p,q)$-forms on $X^\an$. By the powerful theory, they redefine the measure $c_1(\OL_1)\cdots c_1(\OL_n)$ in a more analytic way and deduce the formula above in this analytic setting. In \cite{YZ21}, Yuan and  Zhang generalize the theory in \cite{Zha95} and define \emph{adelic line bundles} on a quasi-projective variety $U$ as an appropriate limit of line bundles on compactifications of $U$. In the loc. cit., they generalize the measure and the intersection theory to adelic line bundles on a quasi-projective variety.

The main purpose of this paper is to generalize Theorem \ref{integration formula in proj complex case} to the quasi-projective case for both archimedean and non-archimedean fields in the setting of adelic line bundles of \cite{YZ21}. In this paper, we call a field $K$ a \emph{complete valuation field}, if $K$ is complete with respect to a non-trivial absolute value. If $K$ is archimedean, let $O_K=K$; if $K$ is non-archimedean, let $O_K$ be the valuation ring of $K$. 

Let $U$ be a quasi-projective variety over a complete valuation field $K$. In \cite[Section 2.7]{YZ21}, authors define a group $\HPic(U/O_K)_\intt$ of \emph{integrable adelic line bundles}. They also define a group $\TPic(U/K)_\intt$ of \emph{integrable adelic line bundles without metrics} on $U$. There is a natural forgetful homomorphism 
\[
\HPic(U/O_K)_\intt\to\TPic(U/K)_\intt,
\]
which is defined in \cite[Section 3.6]{YZ21}. This homomorphism is to forget the metric information of an adelic line bundle. Yuan and Zhang define a measure $c_1(\OL_1)\cdots c_1(\OL_n)$ on $U^\an$ for $\OL_1,\cdots,\OL_n\in\HPic(U/O_K)_\intt$ in \cite[Section 3.6]{YZ21} which generalizes the Monge-Ampère measure in the complex case and the Chambert-Loir measure in the non-archimedean case. They also define an intersection number $\TL_1\cdots \TL_n$ for $\TL_1,\cdots,\TL_n\in\TPic(U/K)_\intt$ in \cite[Section 4.1]{YZ21} generalizing the usual intersection number which only makes sense on projective varieties. Here we use the notation $\TPic(U/K)$ instead of the notation $\HPic(U/K)$ in the loc. cit. We will illustrate the notions later in Section \ref{section of adelic}.

We state the main theorem as follows.
	
\begin{theorem}[main theorem]\label{main theorem}
	Let $U$ be a quasi-projective variety of dimension $n$ over a complete valuation field $K$. Let $U^\an$ be the Berkovich space of $U$ over $K$. Then for integrable adelic line bundles $\OL_1,\cdots,\OL_n\in\HPic(U/O_K)_\intt$, it holds that
	\[
	\int_{U^\an}c_1(\OL_1)\cdots c_1(\OL_n)=\TL_1\cdots \TL_n.
	\]
	Here $\TL_i$ is the image of $\OL_i$ under the natural homomorphism $\HPic(U/O_K)_\intt\to\TPic(U/K)_\intt$.
\end{theorem}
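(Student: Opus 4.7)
The strategy is to reduce the equality to the classical projective Chern formula via the approximation structure of adelic line bundles in \cite{YZ21}. Since both sides of the claimed identity are multilinear in $(\OL_1, \ldots, \OL_n)$, and since every integrable adelic line bundle is expressible as a difference of two nef (semipositive) ones, I first reduce to the case where every $\OL_i$ is nef, so that the associated Chambert-Loir measures are positive and all subsequent limiting manipulations behave well.

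In this setting I would fix, for each $i$, a sequence of nef model metrized line bundles $\OL_i^{(k)}$ on projective compactifications of $U$, converging to $\OL_i$ in the adelic topology of \cite[Section 2.7]{YZ21}. After passing to a common dominating compactification at each level $k$, I may assume that all $\OL_i^{(k)}$ arise by restriction from metrized line bundles on a single projective variety $X^{(k)} \supset U$. Applying Theorem~\ref{integration formula in proj complex case} in the archimedean case and the formulas of Chambert-Loir and Gubler \cite{CL06, Gub07} in the non-archimedean case yields, at each level,
\[
\int_{X^{(k),\an}} c_1\bigl(\OL_1^{(k)}\bigr) \cdots c_1\bigl(\OL_n^{(k)}\bigr) \;=\; L_1^{(k)} \cdots L_n^{(k)}.
\]

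Next I would pass to the limit $k \to \infty$. By the construction of the adelic intersection pairing in \cite[Section 4.1]{YZ21}, the right-hand side converges to $\TL_1 \cdots \TL_n$. On the analytic side, I would split the integral over $X^{(k),\an}$ into its contribution from $U^\an$ and that from the boundary $X^{(k),\an} \setminus U^\an$. By the construction of the adelic measure in \cite[Section 3.6]{YZ21}, the $U^\an$-integrand converges, on any compact subset of $U^\an$, to the integrand of $\int_{U^\an} c_1(\OL_1) \cdots c_1(\OL_n)$, so the core issue is to control the boundary integrals and match them against the corresponding algebraic defect $L_1^{(k)} \cdots L_n^{(k)} - \TL_1 \cdots \TL_n$.

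The principal obstacle is precisely this control of boundary mass: because $U^\an$ is typically non-compact, the restrictions of the approximating Chambert-Loir measures to $U^\an$ need not be tight a priori, and weak convergence on compact subsets alone is insufficient to pin down the total mass. I plan to overcome this by combining the Cauchy property of $\{\OL_i^{(k)}\}_k$ in the sup-norm sense underlying the adelic topology (so that metric differences shrink uniformly) with the nef hypothesis, which provides the uniform total-mass bound needed to execute a dominated-convergence style estimate. Together with a careful bookkeeping of how the definition of $\TL_1 \cdots \TL_n$ in \cite[Section 4.1]{YZ21} encodes the asymptotic boundary correction, this would identify the $k \to \infty$ limit of the left-hand side with $\int_{U^\an} c_1(\OL_1) \cdots c_1(\OL_n)$ and complete the proof.
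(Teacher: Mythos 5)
Your framework matches the paper's reductions: by multilinearity you may assume each $\OL_i$ is strongly nef, choose a common sequence of projective models $X^{(k)}$, invoke Theorem~\ref{integration formula in proj complex case} (resp.\ its Chambert--Loir/Gubler analogue) at each finite level, and note that the right-hand side converges to $\TL_1\cdots\TL_n$ by the definition of the adelic intersection pairing. You also correctly identify the obstacle: the escape of mass through the boundary of $U^\an$.

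The gap is in your proposed resolution of that obstacle. You invoke ``the Cauchy property of $\{\OL_i^{(k)}\}_k$ in the sup-norm sense,'' but the Cauchy condition in \cite[Section 2.7]{YZ21} is \emph{not} a sup-norm bound. It reads $-\epsilon_i\,\OD_0 \le \OD_{j,i} \le \epsilon_i\,\OD_0$, i.e.\ $|g_{j,i}|\le \epsilon_i\, g_0$, where $g_0$ is the Green function of the boundary divisor and hence \emph{unbounded} on $U^\an$, blowing up along $X^{(k),\an}\setminus U^\an$. There is therefore no uniform control on the metric discrepancies over $U^\an$, and a dominated-convergence argument cannot be run directly: near the boundary the integrand is only small after multiplying by quantities that tend to zero, and one must make the loss of the factor $g_0$ compatible with the gain of the factor $\epsilon_i$.

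What is missing is the construction of a specific family of cut-off functions $\chi_A$ (Lemma~\ref{dsh lemma}, Corollary~\ref{dsh lemma on varity}, and their non-archimedean analogues), built from a finite map $f\colon X_0\to\PP^n$ and a smooth psh Green function $h$ with $g_0=f^*h$. These $\chi_A$ are \emph{dsh}: $\d\dc\chi_A=\omega_A^+-\omega_A^-$ with $\omega_A^\pm$ positive of total mass $C_0/A$ against any nef class. On $\supp\chi_A$ one has $g_0\le A$, hence $|g_{j,i}|\le\epsilon_iA$; after integrating by parts the factor $A$ is exactly cancelled by the $1/A$-mass of $\omega_A^\pm$, producing the \emph{$A$-uniform} bound $\left|\int\chi_A\,c_1(\OL_i)^n-\int\chi_A\,c_1(\OL_j)^n\right|\le \epsilon_i C$ (Proposition~\ref{complex last proposition}). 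This uniformity is precisely what lets one first send $j\to\infty$ (weak convergence against the compactly supported $\chi_A$) and then $A\to\infty$ (Lebesgue monotone convergence, using positivity from nefness), decoupling the two limits. Without the dsh construction and the $1/A$ mass normalization, the two limits cannot be interchanged, and your argument does not close.
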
 

In \cite{GV19}, Gauthier and Vigny prove a version of Theorem \ref{main theorem} in complex dynamics setting. In \cite[Lemma 5.4.4]{YZ21}, they prove Theorem \ref{main theorem} in the case that each adelic line bundle $\OL_i$ is a localization of a global integrable adelic line bundle over a global field. To be more specific, there are integrable adelic line bundles $\OL_1',\cdots,\OL_n'$ on a quasi-projective variety $U'$ over a number field or a function field $F$ such that for some place $v$ of $F$, we have $U'\otimes_F F_v\cong U$ and $\OL_i'\otimes_F F_v\cong \OL_i$ for all $1\leq i\leq n$. The key of the proof of this case in loc. cit. is a formula of global arithmetic intersection number which gives an upper bound of local integrations. Nevertheless, our proof in this paper is purely local. It does not use any global data in the proof.

Our theorem has a close relation with the notion of \emph{non-degeneracy} used in the proof of the uniform Mordell-Lang in \cite{DGH20}. Let $S$ be a quasi-projective variety over $\CC$. Let $(X,f,L)$ be a polarized dynamical system over $S$, i.e. $X$ is an integral scheme projective and flat over $S$; $f:X\to X$ is a morphism over $S$; $L\in\Pic(X)_\QQ$ is a $\QQ$-line bundle over $X$, relatively ample over $S$ such that $f^*L=qL$ for some rational number $q>1$. Then by \cite[Theorem 6.1.1]{YZ21}, these data determine a unique nef adelic line bundle $\OL_f$ on $X$ such that $f^*\OL_f=q\OL_f$. Let $\TL_f$ be the image of $\OL_f$ under the forgetful homomorphism 
\[
\HPic(U/\CC)_\intt\to\TPic(U/\CC)_\intt.
\]
For a closed subvariety $Y$ of $X$, we say that $Y$ is \emph{non-degenerate}, if $(\TL_f|_Y)^{\dim Y}>0$. By our theorem,
\[
(\TL_f|_Y)^{\dim Y}=\int_{Y^\an}c_1(\OL_f|_Y)^{\dim Y}.
\]
Since the measure on the right hand side is already positive, $Y$ is non-degenerate if and only if the measure $c_1(\OL_f|_Y)^{\dim Y}$ is non-zero. In particular, when $X\to S$ is an abelian scheme over a smooth quasi-projective variety, the measure coincides with the wedge product of the Betti form in \cite{DGH20,CGHX21}. The main difficulty for the general setting $(X,f,L)$ is to define such a measure, which is defined by \cite{YZ21}, and to relate the measure with the algebraic intersection number. Our main theorem finds such a relation. In addition, it allows us to consider the similar notions over a non-archimedean field.

In a recent work \cite{XY23} of Xie and Yuan, they introduce a new approach to the geometric Bombieri–Lang conjecture. During their proof, they consider an abelian scheme $\CA\to B$ with a rigidified, relatively ample and symmetric line bundle $\CL$ on $\CA$, where $B$ is a smooth quasi-projective curve. In \cite[Theorem 3.1]{XY23}, they use the formula
\[
\hat{h}_{\CL}(s)=\int_{B}s^*\omega.
\]
Here $\hat{h}_{\CL}$ is the N\'eron-Tate height induced by $\CL$; $s$ is a section $s:B\to\CA$; $\omega$ is the Betti form of $\CL$ on $\CA$. In fact, by Tate's limit argument and the section $s$, we get an adelic line bundle $\OL$ on $B$ and an adelic line bundle $\TL$ without metric induced by $\OL$ , such that 
\[
\hat{h}_{\CL}(s)=\deg\TL,\quad  \int_{B}s^*\omega=\int_B c_1(\OL).
\]
Thus this formula is indeed a one-dimensional special case of our main theorem. Note that this special case is first proven by \cite{GV19} since an abelian scheme is automatically a dynamical system. However, it is still a crucial special case of our main theorem.

As the last part of this section, we explain the main idea of the proof. The proof is mainly based on limit processes. In \cite{YZ21}, Yuan and Zhang define adelic line bundles on $U$ as a limit of (metrized) line bundles on compactifications of $U$ satisfying a Cauchy condition. By the Cauchy condition, we can deal with the intersection number $\TL_1\cdots \TL_n$ as a limit of the usual intersection numbers on projective varieties. The hard part is the integration on the left hand side. In the loc. cit., they define the measure $c_1(\OL_1)\cdots c_1(\OL_n)$ by a weak convergence process on $U^\an$. The measure is a weak limit of measures on $U^\an$ induced by line bundles on compactifications of $U$. As a result, we cannot compute the the integration 
\[
\int_{U^\an}c_1(\OL_1)\cdots c_1(\OL_n),
\]
directly since the function $1$ is not compactly supported on $U^\an$. To compute the integration, we will use some cut-off functions which have compact supports on $U^\an$ and take a limit. 
	
In summary, there are two limit processes in the proof. One is the limit process from the compactifications which defines the measure. The other is the Lebesgue monotone convergence theorem which helps us compute the integration.
	
A key of the proof is to find proper cut-off functions to harmonize the two limit processes. The cut-off functions that we use here as a powerful tool are called \emph{dsh} functions, which is the short for \emph{difference of pluri-subharmonic} functions. They were introduced by \cite{DS09}, notably for applications in complex dynamics, and used as test functions to compute volumes in \cite{GOV19,GV19}.

\subsubsection*{Acknowledgment}

I would like to express my sincere thanks to my advisor, Professor Xinyi Yuan, for his invaluable advice and persistent encouragement. I would also thank Thomas Gauthier and Gabriel Vigny since this paper is inspired by their papers. I am grateful to Yulin Cai, Walter Gubler, Yinchong Song, Gabriel Vigny and the anonymous referee for reviewing an early version of this paper and providing some valuable suggestions. Finally, I would thank my classmates Lai Shang and Jiarui Song for some helpful discussions.

\section{Adelic divisors and adelic line bundles \\on quasi-projective varieties} \label{section of adelic}

In this subsection, we recall the theory of adelic line bundles. Our definitions here can be found in \cite[Chapter 3]{YZ21}. Our notations here are the same as the notations in loc. cit. except that we use the notation $\TPic(U/K)$ instead of the notation $\HPic(U/K)$ in loc. cit. 

Let $K$ be a complete valuation field. By a \emph{variety}, we mean an integral scheme separated of finite type over a base field. For a variety $Y$ over $K$, denote by $Y^\an$ the Berkovich space of $Y$ over $K$ in the sence of \cite{Ber90}.	

By a \emph{Cartier $\ZZ$-divisor} (resp. \emph{$\ZZ$-line bundle}) on a variety $Y$, we mean a Cartier divisor (resp. line bundle) in the usual sense. Denote by $\Div(Y)$ (resp. $\Pic(Y)$) the group of Cartier $\ZZ$-divisors (resp. $\ZZ$-line bundles) on $Y$. By a \emph{Cartier $\QQ$-divisor} (resp. \emph{$\QQ$-line bundle}), we mean an element in $\Div(Y)_\QQ=\Div(Y)\otimes_\ZZ\QQ$ (resp. $\Pic(Y)_\QQ=\Pic(Y)\otimes_\ZZ\QQ$).

Let $Y$ be a variety over $K$. We call $\OD=(D,g)$ an \emph{arithmetic $\ZZ$-divisor} on $Y$, if $D$ is a Cartier $\ZZ-$divisor on $Y$ and $g$ is a continuous Green function of $D$. It means that for any rational function $f$ on a Zariski open subset $V$ of $X$ satisfying $\div(f) = D|_V$, the function $g + \log |f|$ can be extended to a continuous function on $V^\an$. We call $\OL=(L,\|\cdot\|)$ a \emph{metrized $\ZZ$-line bundle} on $Y$, if $L$ is a $\ZZ$-line bundle on $Y$ and $\|\cdot\|$ is a continuous metric of $L$ on $Y^\an$. If $K=\CC$, we need further that the metric of $\OL$ is hermitian, i.e. stable under complex conjugation. Here we use the overline to distinguish an arithmetic divisor (resp. metrized line bundle) with an usual Cartier divisor (resp. line bundle).

Denote by $\HDiv(Y^\an)$ (resp. $\HPic(Y^\an)$) the group of arithmetic $\ZZ$-divisors (resp. metrized $\ZZ$-line bundle) on $Y$. By an \emph{metrized $\QQ$-divisor} (resp. \emph{metrized $\QQ$-line bundle}), we mean an element in $\HDiv(Y^\an)_\QQ=\HDiv(Y^\an)\otimes_\ZZ\QQ$ (resp. $\HPic(Y^\an)_\QQ=\HPic(Y^\an)\otimes_\ZZ\QQ$). In the following, for simplicity, if we are talking about a $\ZZ$-divisor or a $\ZZ$-line bundle, we will omit the ``$\ZZ$'' unless we want to emphasize that.

For a metrized line bundle $\OL=(L,\|\cdot\|)$ and a rational section $s$ of $L$, we can define a natural arithmetic divisor by $\hdiv(s)=(\div(s),-\log\|s\|)$. This process can be done similarly for a metrized $\QQ$-line bundle. Let $\OL=(L,\|\cdot\|)$ be a metrized $\QQ$-line bundle. By a \emph{rational section} $s$ of $L$, it is indeed given by $\frac{1}{n}s_n$. Here $s_n$ is a usual rational section of $L^{\otimes n}$, where $L^{\otimes n}$ is a $\ZZ$-line bundle. In addition, define 
\[
\hdiv(s):=\left(\frac{1}{n}\div(s_n),-\frac{1}{n}\log\|s_n\|\right)\in\HDiv(X)_\QQ.
\]

Define the \emph{Chern current} of an arithmetic $\QQ$-divisor $\OD=(D,g)$ separately in the archimedean case and non-archimedean case. For $K=\CC$, define
\[
c_1(\OD)=\d\dc g+\delta_D,
\]
where 
\[
d=\partial+\bar\partial,\ \dc=\frac{1}{2\pi i}(\partial-\bar\partial),\ \partial=\frac{\partial}{\partial z},\ \bar\partial=\frac{\partial}{\partial \bar{z}}.
\]
For non-archimedean $K$, define
\[
c_1(\OD)=\d'\d'' g+\delta_D,
\]
where the operators $\d'$ and $\d''$ are came up in \cite{CLD12}, which are analogies to $\partial$ and $\bar\partial$ in the complex case. Here $\delta_D$ is the Dirac measure of $D$.

Define the \emph{Chern current} of a metrized $\QQ$-line bundle $\OL$ by
\[
c_1(\OL):=c_1(\hdiv(s)).
\]
Here $s$ is a rational section of $L$ and the definition is independent of $s$. 

For an arithmetic $\QQ$-divisor $\OD=(D,g)$, it is called \emph{effective} if a positive integer multiple of $D$ is an effective Cartier $\ZZ$-divisor and $g\geq0$; called \emph{nef} (resp. \emph{ample}) if a positive integer multiple of $D$ is a nef (resp. ample) Cartier divisor in the usual sense and the Chern current $c_1(\OD)$ is a positive  current on $X^\an$. A metrized line bundle $\OL$ is called \emph{effective} (resp. \emph{nef, ample}) if there is a rational section $s$ of $L$ such that $\hdiv(s)$ is effective (resp. nef, ample).

Let $U$ be a quasi-projective variety over $K$. We call $\CX$ a \emph{projective model} of $U$ over $O_K$, if $\CX$ is an integral projective scheme flat over $O_K$ who contains $U$ as a Zariski open subset. Denote by $X$ the generic fiber $\CX_K$. Then $X$ is a projective variety over $K$ containing $U$ as an open subset. Note that if $K$ is archimedean, we actually have $X=\CX$.

For $\CX,\CY$ two projective models of $U$ over $O_K$, a morphism $f:\CX\to \CY$ is called a \emph{morphism of projective models} if $f|_U=\mathrm{id}_U$. Let $\CX,\CY$ be two projective models of $U$. We say that $\CX$ \emph{dominates} $\CY$ as projective models if there is a morphism of projective models from $\CX$ to $\CY$. For any two projective models $\CX,\CY$ of $U$, there is a projective model $\CZ$ which dominates both $\CX$ and $\CY$. For example, we can take $\CZ$ to be the Zariski closure of $\Delta_U$ in $\CX\times \CY$, where $\Delta_U$ is the diagonal of $U\times U$.

\subsection{Archimedean case}

In this case, $K=\CC$ or $\RR$. Since all constructions on $\CC$ can be moved to $\RR$ by taking the invariant part under the complex conjugation, we may suppose that $K=\CC$.

We call a pair $(X_0,\OD_0)$ a \emph{boundary divisor} of $U$, if $X_0$ is a projective model of $U$ over $O_K$ and $\OD_0$ is an effective arithmetic $\ZZ$-divisor on $X_0$ such that the support $|D_0|=X_0\backslash U$ and $g_0>0$. In the following, we fix a boundary divisor $(X_0,\OD_0)$ of $U$.

An \emph{adelic line bundle} on $U$ is a collection of data $\OL=(L,(X_i,\OL_i,l_i)_{i\geq1})$ satisfying a \emph{Cauchy condition}. Here $X_i$ is a projective model of $U$; $L$ is a $\ZZ$-line bundle on $U$; $\OL_i=(L_i,\|\cdot\|_i)$ is a hermitian $\QQ$-line bundle on $X_i$;  and $l_i:L_i|_U\stackrel{\thicksim}\rightarrow L$ is an ismorphism of line bundles. For any $X_j,X_i$, take a projective model $X_{j,i}$ dominating both $X_j$ and $X_i$. Denote by $p_1:X_{j,i}\to X_j$ and $p_2:X_{j,i}\to X_i$ the morphisms.  For simplicity, denote by $\OL_j\otimes \OL_i^{-1}$ the $\QQ$-line bundle $p_1^*\OL_j\otimes p_2^*\OL_i^{-1}$. Its endowed metric is $\|\cdot\|_j/\|\cdot\|_i$. Define an arithmetic $\QQ$-divisor $\OD_{j,i}$ on $X_{j,i}$ by taking the rational section $l_jl_i^{-1}$ of $L_j\otimes L_i^{-1}$. To be specific,
\[
\OD_{j,i}=\hdiv(l_jl_i^{-1})=(D_{j,i},g_{j,i})=(\div(l_jl_i^{-1}),\|1\|_j/\|1\|_i).
\]
Here $1$ means the rational section of $L_i$ corresponding to the identity section of $\CO_{X_i}$. Then we can state the Cauchy condition of these data as
\[
-\epsilon_i\cdot \OD_0\leq \OD_{j,i}\leq \epsilon_i\cdot \OD_0,\quad \forall j>i\geq1,
\]
where $\epsilon_i$ is a positive constant only depending on $i$ such that $\lim\limits_{i\to\infty}\epsilon_i=0$. Here we use the notation $\OD_0$ to refer its pull-back on $X_{j,i}$. The Cauchy condition is independent of the choice of the projective model $X_{j,i}$. Denote by $\HPic(U/O_K)$ the group of adelic line bundles.

We can also define an \emph{adelic line bundle without metric} $\TL=(L,(X_i,L_i,l_i)_{i\geq1})$ on $U$. Here we use tilde to emphasize that there is no metric data. All the notations and Cauchy conditions are the same as above. The only difference is that we replace hermitian line bundles by line bundles. Denote by $\TPic(U/K)$ the group of adelic line bundles without metrics.

Note that an adelic line bundle $\OL$ (resp. $\TL$) may have different representation of data $\OL=(L,(X_i,\OL_i,l_i)_{i\geq1})$ (resp. $\TL=(L,(X_i,L_i,l_i)_{i\geq1})$). We say an adelic line bundle $\OL$ (resp. $\TL$) is \emph{strongly nef} if there is a representation of data of it such that $\OL_i$ (resp. $L_i$) is nef on $X_i$ for all $i\geq 1$. We say an adelic line bundle is \emph{integrable} if it can be written as the difference of two strongly nef adelic line bundles. 

Denote by $\HPic(U/O_K)_\snef$ (resp. $\TPic(U/K)_\snef$) the subgroup of $\HPic(U/O_K)$ 
\\(resp. $\TPic(U/K)$) consisting of strongly nef adelic line bundles. Denote by $\HPic(U/O_K)_\intt$ (resp. $\TPic(U/K)_\intt$) the subgroup of $\HPic(U/O_K)$ (resp. $\TPic(U/K)$) consisting of integrable adelic line bundles.

\subsection{Non-archimedean case}

In this case, $K$ is a complete valuation field with respect to a non-archimedean absolute value. 

Let $U$ be a quasi-projective variety over $K$. Let $\CX$ be a projective model of $U$ over $O_K$ with generic fiber $X$. Then by the process in \cite{Zha95}, there are natural homomorphisms
\[
\Div(\CX)\longrightarrow\HDiv(X^\an),\quad \Pic(\CX)\longrightarrow\HPic(X^\an).
\]
In addition, for an effective Cartier divisor $\CD\in\Div(\CX)$, its image $\OD\in\HDiv(X^\an)$ is effective. The inverse is true if $\CX$ is normal.

We call a pair $(\CX_0,\CD_0)$ a \emph{boundary divisor} of $U$, if $\CX_0$ is a projective model of $U$ and $\CD_0$ is an effective arithmetic $\ZZ$-divisor on $X_0$ such that the support $|\CD_0|=\CX_0\backslash U$. Note that $\CD_0$ induces an effective arithmetic divisor $\OD_0=(D_0,g_0)$ on $X_0$. In the following, we fix a boundary divisor $(\CX_0,\CD_0)$ of $U$.

The spirit of definitions in this case is similar to the archimedean case. We will only illustrate the definition of $\HPic(U/O_K)$ as an example.

An \emph{adelic line bundle} on $U$ is a collection of data $\OL=(L,(\CX_i,\CL_i,l_i)_{i\geq1})$ satisfying a Cauchy condition. Here $\CX_i$ is a projective model of $U$; $L$ is a $\ZZ$-line bundle on $U$; $\CL_i$ is a $\QQ$-line bundle on $\CX_i$;  and $l_i:\CL_i|_U\stackrel{\thicksim}\rightarrow L$ is an isomorphism of line bundles. For any $\CX_j,\CX_i$, take a projective model $\CX_{j,i}$ dominating both $\CX_j$ and $\CX_i$. Denote by $p_1:\CX_{j,i}\to \CX_j$ and $p_2:\CX_{j,i}\to \CX_i$ the morphisms. For simplicity, denote by $\CL_j\otimes \CL_i^{-1}$ the $\QQ$-line bundle $p_1^*\CL_j\otimes p_2^*\CL_i^{-1}$. Define a Cartier $\QQ$-divisor $\CD_{j,i}$ on $\CX_{j,i}$ by 
\[
\CD_{j,i}=\div(l_jl_i^{-1}).
\]
It induces an arithmetic divisor $\OD_{j,i}=(D_{j,i},g_{j,i})$ on $X_{j,i}$. The Cauchy condition is
\[
-\epsilon_i\cdot \CD_0\leq \CD_{j,i}\leq \epsilon_i\cdot \CD_0,\quad \forall j>i\geq1,
\]
where $\epsilon_i$ is a positive constant only depending on $i$ such that $\lim\limits_{i\to\infty}\epsilon_i=0$. Here we use the notation $\CD_0$ to refer its pull-back on $\CX_{j,i}$. The Cauchy condition is independent of the choice of the projective model $\CX_{j,i}$. Denote by $\HPic(U/O_K)$ the group of adelic line bundles. 

We define $\TPic(U/K)=\HPic(U/O_K)$ in this case. If we talk about an element of $\HPic(U/O_K)$ (resp. $\TPic(U/K)$), we always pay attention to (resp. ignore) the induced metric information on $U^\an$. To distinguish these two cases, for an adelic line bundle $\OL=(L,(\CX_i,\CL_i,l_i)_{i\geq1})$, we also write it as $\OL=(L,(X_i,\OL_i,l_i)_{i\geq1})$ (resp. $\TL=(L,(X_i,L_i,l_i)_{i\geq1})$) when we refer to an element of $\HPic(U/O_K)$ (resp. $\TPic(U/K)$), where $X_i$ is the generic fiber of $\CX_i$ and $\OL_i$ is the image of $\CL_i$ under the natural homomorphism $\Pic(\CX)\to\HPic(X^\an)$. Note that the latter notations have the same form as the archimedean case.

We say an adelic line bundle $\OL$ is \emph{strongly nef} if there is a representation of data of it such that $\CL_i$ is nef on $\CX_i$ for all $i\geq 1$. We say an adelic line bundle is \emph{integrable} if it can be written as the difference of two strongly nef adelic line bundles. 

Denote by $\HPic(U/O_K)_\snef$ the subgroup of $\HPic(U/O_K)$ consisting of strongly nef adelic line bundles. Denote by $\HPic(U/O_K)_\intt$ the subgroup of $\HPic(U/O_K)$ consisting of integrable adelic line bundles. Define $\TPic(U/K)_\snef=\HPic(U/O_K)_\snef$ and $\TPic(U/K)_\intt=\HPic(U/O_K)_\intt$.

\subsection{Natural maps}

Let $U$ be a quasi-projective variety over a complete valuation field $K$. We have recalled several groups in last subsection. In this subsection, we will recall some natural homomorphisms between them.

\begin{itemize}
\item 
The forgetful homomorphism

\[
\begin{array}{r@{\;}c@{\;}l@{\;}c@{\;}l}
\HPic(U/O_K) &\longrightarrow& \TPic(U/K) \\
\OL=(L,(X_i,\OL_i,l_i)_{i\geq1}) &\longmapsto& \TL=(L,(X_i,L_i,l_i)_{i\geq1}).
\end{array}
\]
From the definition, it can be easily seen that it induces 
\[
\HPic(U/O_K)_\snef \longrightarrow \TPic(U/K)_\snef,\quad \HPic(U/O_K)_\intt \longrightarrow \TPic(U/K)_\intt.
\]
	
\item 
A natural homomorphism
\[
\begin{array}{r@{\;}c@{\;}l@{\;}c@{\;}l}
\HPic(U/O_K) &\longrightarrow& \HPic(U^\an) \\
\OL=(L,(X_i,\OL_i,l_i)_{i\geq1}) &\longmapsto& (L,\|\cdot\|).
\end{array}
\]	
The metric $\|\cdot\|$ on $L$ is given by the limit of the metric $\|\cdot\|_i$ on $\OL_i$. The homomorphism is injective by \cite[Proposition 3.6.1]{YZ21}.
\end{itemize}

\subsection{Intersection numbers and measures}

In this subsection, we state two important theorems. The proof of them can be found in \cite[Section 3.6, Section 4.1]{YZ21}.

\begin{theorem}\label{intersection number of adelic line bundles}
	Let $U$ be a quasi-projective variety over a complete valuation field $K$ of dimension $n$. Then there is a well-defined multilinear homomorphism
	\[
	\begin{array}{r@{\;}c@{\;}l@{\;}c@{\;}l}
	\TPic(U/K)_\intt^n &\longrightarrow& \RR \\\\
	(\TL_1,\cdots,\TL_n) &\longmapsto& \TL_1\cdots\TL_n.
	\end{array}
	\]
	In addition, for $(\TL_1,\cdots,\TL_n)\in\TPic(U/K)_\snef^n$ such that each $\TL_i$ is given by the data $\TL_i=(L_i,(X_k,L_{i,k},l_{i,k})_{k\geq1})$ such that $X_k$ is the same for all $i$, we have
	\[
	\TL_1\cdots\TL_n=\lim\limits_{k\to\infty} L_{1,k}\cdots L_{n,k}.
	\]
	Here the right hand side is the usual intersection number on $X_k$.
\end{theorem}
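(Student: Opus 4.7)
The plan is to build the pairing first on $n$-tuples of strongly nef adelic line bundles sharing a common sequence of models, and then bootstrap by multilinearity. \textbf{Step 1} (Cauchy convergence): given strongly nef $\TL_1,\dots,\TL_n$ represented by data $\TL_i=(L_i,(X_k,L_{i,k},l_{i,k})_{k\ge 1})$ with the $X_k$ common and each $L_{i,k}$ nef on $X_k$, I show that $a_k:=L_{1,k}\cdots L_{n,k}$ is a Cauchy sequence in $\QQ$. On a common dominating model $X_{j,k}$ for $j>k$, the telescoping identity reads
\[
a_j-a_k=\sum_{i=1}^{n}L_{1,j}\cdots L_{i-1,j}\cdot D_{i,j,k}\cdot L_{i+1,k}\cdots L_{n,k},
\]
where $D_{i,j,k}=\div(l_{i,j}l_{i,k}^{-1})$. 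The Cauchy condition gives $-\epsilon_k D_0\le D_{i,j,k}\le \epsilon_k D_0$, while the same condition with base index $1$ sandwiches each $L_{i,j}$ between $L_{i,1}\pm\epsilon_1 D_0$. Since the remaining factors in every telescoping term are nef, pairing these divisor-level inequalities against them converts them into numerical bounds, yielding a uniform estimate $|a_j-a_k|\le C\epsilon_k$ in terms of fixed intersection numbers built from the $L_{i,1}$ and $D_0$. The Cauchy property follows and defines the limit $\TL_1\cdots\TL_n$.

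\textbf{Step 2} (independence of representation): if $\TL_i$ is represented twice via common-model data with sequences $(X_k)$ and $(X_k')$, I form at each $k$ a projective model $Y_k$ dominating both $X_k$ and $X_k'$, pull the two sets of line bundles back, and use the mutual Cauchy condition witnessing the equality of the two adelic classes to run the Step 1 estimate between the two intersection sequences; both converge to the same limit. \textbf{Step 3} (general strongly nef): if the models $X_{i,k}$ vary with $i$, construct at each $k$ a projective model $Y_k$ dominating all $X_{i,k}$ and the fixed boundary model $X_0$, pull the $L_{i,k}$ back to $Y_k$, and note that the resulting common-model data still represents $\TL_i$; Step 2 then guarantees that the limit is unchanged.

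\textbf{Step 4} (extension to $\TPic(U/K)_\intt^n$): write each $\TL_i=\widetilde{M}_i-\widetilde{N}_i$ with $\widetilde{M}_i,\widetilde{N}_i$ strongly nef and define $\TL_1\cdots\TL_n$ as the alternating sum of the $2^n$ strongly nef intersection numbers supplied by Step 3. Independence of decomposition follows from additivity of ordinary intersection numbers on each projective $X_k$ passed to the limit: if $\widetilde{M}_i-\widetilde{N}_i=\widetilde{M}_i'-\widetilde{N}_i'$ then $\widetilde{M}_i+\widetilde{N}_i'=\widetilde{M}_i'+\widetilde{N}_i$ as strongly nef classes, and additivity in each variable forces the two $2^n$-expansions to agree; multilinearity of the final pairing is then immediate. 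The decisive step is Step 1: the Cauchy sandwich on $D_{i,j,k}$ is an inequality of $\QQ$-Cartier divisors, and numerical boundedness of the telescoping terms is obtained only by pairing it against nef factors, which is where strong nefness is used essentially and why the strongly nef case must be handled before the multilinear extension. The remaining steps amount to bookkeeping with dominating models and the Cauchy condition.
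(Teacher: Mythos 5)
The paper does not actually prove this theorem---it is recalled verbatim from Yuan--Zhang, and the text points to \cite{YZ21}, Section~4.1 for a proof---so there is no in-paper argument to compare yours against; I judge the proposal on its own terms.

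Your overall strategy (telescoping $a_j-a_k$, bounding each telescoped term via the Cauchy sandwich, then handling independence of representations, common dominating models, and the multilinear extension to the integrable group) is the natural one and is in line with what Yuan--Zhang do. But Step~1 has a genuine gap at precisely the place you call decisive. After pairing the sandwich $-\epsilon_k D_0\le D_{i,j,k}\le \epsilon_k D_0$ against the nef remaining factors, you still need a bound, uniform in $j$ and $k$, on intersection numbers of the form $D_0\cdot L_{1,j}\cdots L_{i-1,j}\cdot L_{i+1,k}\cdots L_{n,k}$. You propose to obtain it by substituting $L_{m,j}\le L_{m,1}+\epsilon_1 D_0$ one factor at a time. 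Each such substitution increases the intersection number only when \emph{all} the remaining factors are nef---but $D_0$ is one of those factors and a boundary divisor is only required to be effective, not nef. Two effective divisors paired against nef classes can well be negative (think of an exceptional divisor), so the monotone substitution does not go through as written and the uniform constant $C$ is not actually produced. The fix is short but load-bearing: since $D_0$ is effective on the projective model $X_0$, pick an ample $H_0$ on $X_0$ and an integer $m$ with $mH_0-D_0$ effective; then the original Cauchy condition implies $-\epsilon_k\, mH_0\le D_{i,j,k}\le \epsilon_k\, mH_0$, and running your Step~1 with $mH_0$ in place of $D_0$ makes every substitution $L_{m,j}\le L_{m,1}+\epsilon_1 mH_0$ legitimate because $mH_0$ is nef, yielding the uniform bound $mH_0\cdot\prod_m(L_{m,1}+\epsilon_1 mH_0)$. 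You need to state this reduction explicitly; once it is in place, Steps~2--4 are correct bookkeeping and the proof closes.
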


For $\TL_1,\cdots,\TL_n\in\TPic(U/K)_\intt^n$, we call the image $\TL_1\cdots\TL_n$ the \emph{intersection number} of them.

\begin{theorem}\label{measure of adelic line bundles}
Let $U$ be a quasi-projective variety over a complete valuation field $K$ of dimension $n$. Then there is a well-defined multilinear homomorphism
\[
\begin{array}{r@{\;}c@{\;}l@{\;}c@{\;}l}
\HPic(U/O_K)_\intt^n &\longrightarrow& \CM(U^\an) \\\\
(\OL_1,\cdots,\OL_n) &\longmapsto& c_1(\OL_1)\cdots c_1(\OL_n).
\end{array}
\]
Here $\CM(U^\an)$ is the space of measures on $U^\an$. In addition, for $(\OL_1,\cdots,\OL_n)\in\HPic(U/O_K)_\snef^n$ such that each $\TL_i$ is given by the data $\OL_i=(L_i,(X_k,\OL_{i,k},l_{i,k})_{k\geq1})$ such that $X_k$ is the same for all $i$, we have
\[
c_1(\OL_1)\cdots c_1(\OL_n)=\lim\limits_{k\to\infty} c_1(L_{1,k})\cdots c_1(L_{n,k}).
\] 
On the right hand side, if $K=\CC$, the measure is the Monge-Ampère measure in complex analysis; if $K$ is non-archimedean, it is the Chambert-Loir measure. Here the limit is the weak limit, i.e. for each function $f$ supported on a compact subset of $U^\an$,
\[
\int_{U^\an}fc_1(\OL_1)\cdots c_1(\OL_n)=\lim\limits_{k\to\infty}\int_{U^\an}fc_1(L_{1,k})\cdots c_1(L_{n,k}).
\]

\end{theorem}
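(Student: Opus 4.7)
The plan is to construct the measure on strongly nef tuples first and then extend by multilinearity. Given $\OL_1,\ldots,\OL_n\in\HPic(U/O_K)_\snef$, I would first refine the representations so they share a common sequence of projective models $(X_k)_{k\ge 1}$, writing $\OL_i=(L_i,(X_k,\OL_{i,k},l_{i,k})_{k\ge 1})$. For each fixed $k$, the wedge product
\[
\mu_k:=c_1(\OL_{1,k})\wedge\cdots\wedge c_1(\OL_{n,k})
\]
is the Monge-Amp\`ere (archimedean) or Chambert-Loir (non-archimedean) measure on $X_k^\an$, a positive Radon measure of total mass $L_{1,k}\cdots L_{n,k}$. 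The goal is to show the restrictions $\mu_k|_{U^\an}$ converge weakly in $\CM(U^\an)$: for every $f\in C_c(U^\an)$, the sequence $\int_{U^\an}f\,d\mu_k$ is Cauchy. The weak limit is then declared to be $c_1(\OL_1)\wedge\cdots\wedge c_1(\OL_n)$ and is automatically positive.

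For the Cauchy estimate, fix $j>k$ and pass to a projective model $X_{j,k}$ dominating both $X_j$ and $X_k$. The telescoping identity
\[
\mu_j-\mu_k=\sum_{i=1}^{n}\Bigl(\bigwedge_{l<i}c_1(\OL_{l,j})\Bigr)\wedge c_1\bigl(\OL_{i,j}\otimes\OL_{i,k}^{-1}\bigr)\wedge\Bigl(\bigwedge_{l>i}c_1(\OL_{l,k})\Bigr)
\]
splits the difference into $n$ pieces. Since $l_{i,j}l_{i,k}^{-1}$ is regular and nowhere-vanishing on $U$, its divisor $D_{i,j,k}$ is supported in $X_{j,k}\setminus U$, so on $U^\an$ the middle factor reduces to $\d\dc g_{i,j,k}$ (resp.\ $\d'\d'' g_{i,j,k}$). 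By the Cauchy condition, taking the maximum of the individual error constants across $i$ gives a single null sequence $\epsilon_k\to 0$ with $|g_{i,j,k}|\le\epsilon_k\,g_0$ on $U^\an$; since $\supp f$ is compact in $U^\an$ and $g_0$ is continuous there, $|g_{i,j,k}|\le \epsilon_k M$ on $\supp f$ for $M:=\sup_{\supp f}g_0<\infty$. Approximate $f$ uniformly by smooth compactly supported $\widetilde f_\eta$, integrate by parts to transfer $\d\dc$ from $g_{i,j,k}$ onto $\widetilde f_\eta$, and bound each term by a Chern-Levine-Nirenberg type inequality---the classical version archimedeanly and its analog from \cite{CLD12} non-archimedeanly. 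Combined with uniform-in-$k$ mass bounds for the remaining wedge products of nef currents (via comparison with a fixed ample polarization), this yields $\bigl|\int_{U^\an}f\,d(\mu_j-\mu_k)\bigr|\le C\epsilon_k$ with $C$ independent of $j,k$, and the Cauchy property follows.

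Independence of the representation is handled by interleaving two representations of the same tuple into a single Cauchy sequence and reapplying the estimate. Extension to $\HPic(U/O_K)_\intt^n$ proceeds by multilinear expansion: decompose each $\OL_i=\OA_i-\OB_i$ with $\OA_i,\OB_i$ strongly nef, define the product by expanding, and verify that the result is independent of the decomposition by one further telescoping comparison. The explicit weak-limit formula of the theorem is built into the construction by definition.

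The main obstacle will be the Cauchy estimate for merely continuous $f$: integration by parts requires smoothness, so one must regularize $f$, keep the regularization error uniform in $j,k$, and absorb it via a Chern-Levine-Nirenberg bound for wedge products of positive currents against functions with bounded potentials. A parallel difficulty is that $g_0$ can blow up near $\partial U$, so the pointwise bound $|g_{i,j,k}|\le\epsilon_k g_0$ is only usable on compact subsets of $U^\an$---which is precisely where the compact support of $f$ keeps the analysis.
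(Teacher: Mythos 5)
This statement is not proved in the paper. The paper explicitly says ``The proof of them can be found in \cite[Section 3.6, Section 4.1]{YZ21}'' and treats both Theorem \ref{intersection number of adelic line bundles} and Theorem \ref{measure of adelic line bundles} as imported results from Yuan--Zhang. There is therefore no ``paper's own proof'' to compare against; what you have written is a reconstruction of the argument that \cite{YZ21} carries out.

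That said, your sketch follows the natural and, as far as I can tell, correct route: pass to a common cofinal sequence of models, form $\mu_k$ on each model, restrict to $U^\an$, prove the family $\bigl(\int_{U^\an}f\,d\mu_k\bigr)_k$ is Cauchy for fixed $f\in C_c(U^\an)$ via the telescoping identity, and use the Cauchy condition $|g_{i,j,k}|\le\epsilon_k g_0$ together with the compactness of $\supp f$ to localize the estimate. The observation that $\div(l_{i,j}l_{i,k}^{-1})$ is supported off $U$, so the middle Chern current on $U^\an$ is $\d\dc g_{i,j,k}$ (resp.\ $\d'\d'' g_{i,j,k}$), is exactly the right reason the divisor part drops out. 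Two points deserve more care than your sketch gives them. First, the ``Chern--Levine--Nirenberg type inequality'' you invoke to bound the wedge products against $g_{i,j,k}$ is classical archimedeanly but requires a specific statement in the Berkovich setting; you should justify it through \cite{CLD12} or \cite{GK17} rather than treat it as automatic. Second, when you integrate by parts to move $\d\dc$ onto a regularization $\widetilde f_\eta$ of $f$, the resulting boundary terms live near $\partial(\supp f)$, and you need to make sure the regularization does not push mass across $\partial U^\an$ where $g_0$ blows up; this is the reason one usually works directly with smooth model functions (or the $\chi_A$-type cut-offs that the present paper uses later for Theorem \ref{main theorem}) rather than generic continuous $f$. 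Addressing those two points would bring your sketch to the level of a proof.
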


The measure $c_1(\OL_{1,k})\cdots c_1(\OL_{n,k})$ on the right hand side is positive if each $\OL_{i,k}$ is nef. As a result, for $\OL_1,\cdots,\OL_n\in\HPic(U/O_K)_\snef$, the measure $c_1(\OL_1)\cdots c_1(\OL_n)$ on $U^\an$ is positive.

\section{The proof of the archimedean case}

In this case, $K=\CC$ or $\RR$. It suffices to prove Theorem \ref{main theorem} in the case that $K=\CC$.

\subsection{Reductions} \label{section_redn}

We do some reductions in this subsection. Let $U$ be a quasi-projective variety over $K$ of dimension $n$. Note that the adelic line bundle $\OL_i$ in Theorem \ref{main theorem} is integrable, which means that it is a difference of two strongly nef adelic line bundles. By the multilinearity, it suffices to prove the formula for strongly nef adelic line bundles $\OL_1,\cdots,\OL_n$ on $U$. In addition, there is an equality of polynomials
\[
n!\cdot t_1\cdots t_n=\sum\limits_{I\subset\{1,2,\cdots,n\}}(-1)^{n-\# I}\left(\sum\limits_{i\in I}t_i\right)^n,
\]
which also holds for the intersection numbers $\TL_1\cdots \TL_n$ or the measures $c_1(\OL_1)\cdots c_1(\OL_n)$ by the multilinearity. Hence it suffices to prove that
\[
\int_{U^\an}c_1(\OL)^n=\TL^n
\]
for a single strongly nef adelic line bundle $\OL=(L,(X_i,\OL_i,l_i)_{i\geq1})$ on $U$. In addition, since for each two projective models, there is a third one dominating them, we can assume that $X_j$ dominates $X_i$ for all $j\geq i\geq 0$. In other word, for each $j>i\geq0$, there is a morphism $\phi_{j,i}:X_j\to X_i$ such that $\phi_{j,i}|_U=\mathrm{id}_U$.

We may replace $U$ by its smooth locus. This process shrinks $U$ but doesn't change anything in our theorem. By replacing $X_i$ by its normalization, we may assume that $X_i$ is normal for all $i\geq0$. Since $U$ is smooth, $U$ is stable under this process. Thus $X_i$ is still a projective model of $U$. Note that after the normalization, the domination condition that $X_j$ dominates $X_i$ for all $j> i\geq0$ still holds.

Fix a boundary divisor$(X_0,\OD_0)$, where $\OD_0=(D_0,g_0)$ is effective. We need the following lemma to modify the boundary divisor.

\begin{lemma} \label{finite morphism lemma}
Let $K$ be an infinite field. Let $X$ be a normal projective variety over $K$ of dimension $n$. Then for an effective Cartier divisor $D$ on $X$, there is a finite surjective morphism $f:X\to\PP^n$ such that $f(|D|)$ is of pure dimension $n-1$. Here $|D|$ is the support of $D$.
\end{lemma}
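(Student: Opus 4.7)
The plan is to prove this via a generic linear projection from a projective embedding, which is the standard way to produce a finite surjective Noether-normalization-type morphism to $\PP^n$. The observation that makes the statement about $f(|D|)$ essentially automatic is that any finite morphism preserves dimensions of closed subvarieties, so once $f$ is finite, the conclusion on $f(|D|)$ reduces to knowing that $|D|$ itself is pure of dimension $n-1$.

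First I would fix a closed embedding $X \hookrightarrow \PP^N$ for some $N \ge n$. I then want to choose $n+1$ linear forms $\ell_0,\ldots,\ell_n$ on $\PP^N$ whose common zero locus $\Lambda \subset \PP^N$ (a linear subspace of dimension $N-n-1$) does not meet $X$; equivalently, $(\ell_0:\cdots:\ell_n)$ has no base point on $X$. This is a general-position argument: the locus of $(n+1)$-tuples of linear forms whose common zero meets $X$ is a proper closed subvariety of the parameter space (by a dimension count, since $\dim X + \dim \Lambda = n + (N-n-1) = N-1 < N$), so the hypothesis that $K$ is infinite produces a $K$-point in the complement. Setting $f = (\ell_0:\cdots:\ell_n) \colon X \to \PP^n$ gives a well-defined morphism.

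Next I would verify that $f$ is finite and surjective. Properness is automatic since $X$ is projective, so by the standard criterion it suffices to show $f$ is quasi-finite. The scheme-theoretic fiber of $f$ over a point $p \in \PP^n$ is $X \cap L_p$, where $L_p \cong \PP^{N-n}$ is the unique $(N-n)$-dimensional linear subspace of $\PP^N$ containing $\Lambda$ mapping to $p$. Since $\Lambda$ is a hyperplane of $L_p$ and $X \cap \Lambda = \emptyset$, any positive-dimensional component of $X \cap L_p$ would have to meet the hyperplane $\Lambda$ inside $L_p$ by projective intersection theory, a contradiction. Hence each fiber is finite, and $f$ is finite. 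Since a finite morphism from a pure $n$-dimensional variety has image of dimension $n$, and $\PP^n$ is irreducible of dimension $n$, $f$ is surjective.

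Finally, I would read off the conclusion on $f(|D|)$. Since $D$ is an effective Cartier divisor on the normal variety $X$, it is locally cut out by a single nonzero regular function, so by Krull's Hauptidealsatz every irreducible component of $|D|$ has codimension exactly one in $X$; that is, $|D|$ is of pure dimension $n-1$. Writing $|D| = \bigcup_i D_i$ into irreducible components, each restriction $f|_{D_i}$ is finite, so $\dim f(D_i) = \dim D_i = n-1$, and $f(|D|) = \bigcup_i f(D_i)$ is therefore of pure dimension $n-1$. The only nontrivial step in the whole plan is arranging $\Lambda \cap X = \emptyset$, which is precisely where the hypothesis that $K$ is infinite enters; everything after that is a formal consequence of finiteness.
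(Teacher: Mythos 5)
Your proposal is correct and follows essentially the same route as the paper: embed $X$ in $\PP^N$, choose a linear center of projection disjoint from $X$ (using that $K$ is infinite), and project to $\PP^n$, then observe that finiteness of $f$ together with the pure codimension-one nature of $|D|$ on a normal variety gives the conclusion. The only difference is that the paper cites Mumford's \emph{Red Book} (Prop.~II.7.6) for finiteness and surjectivity of the projection, whereas you re-derive it; the substance is the same.
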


\begin{proof}
	Since $X$ is normal, $D$ induces a nonzero effective Weil divisor, which we still denote by $D$. Write the Weil divisor $D=\sum a_iD_i$, where $D_i$ is a prime Weil divisor. Embed $X$ into $\PP^N$ for some $N$. Take a linear subspace $Z\cong\PP^{N-n-1}$ such that $Z\cap X=\emptyset$. Take an arbitrary linear projective subspace $Y\cong\PP^n$ such that $Y\cap Z=\emptyset$. Let $f:\PP^N\backslash Z\to Y$ be the projection from $Z$ to $Y$. By \cite[Proposition \uppercase\expandafter{\romannumeral2}.7.6]{Mum99}, $f$ is surjective and finite. Then $f(D_i)$ is of pure dimension $n-1$ for all $i$.
\end{proof}

By the lemma, there is a finite surjective morphism $f: X_0\to \PP^n$. Then for some sufficiently large $d$, there exists an effective section $H=\div(s_H)$ of $\CO_{\PP^n}(d)$ such that $H-f_*(D_0)$ is ample. In particular, $f_*(D_0)\subset H$. Here we write $D_0$ for the Weil divisor induced by $D_0$.

Take an arbitrary smooth semi-positive metric of $\CO_{\PP^n}(d)$. For example, take 
\[
\|s(z)\|_{\sm}=\frac{|s(z)|}{\sum\limits_{\alpha_0+\cdots+\alpha_n=d \atop \alpha_i\in\ZZ,\ \alpha_i\geq0 }|z_0|^{\alpha_0}\cdots|z_n|^{\alpha_n}}.
\]
Let $h=-\log |s_H|$. Then $(H,h)=\hdiv(s_H)$, a section of $\overline{\CO}_{\PP^n}(d)=(\CO_{\PP^n}(d),\|\cdot\|_\sm)$. Since the metric is smooth and positive, $h$ is a smooth psh function. In addition, we may change $h$ by a constant such that $f^*h-g_0>0$. Then we may shrink $U$ and replace $(D_0,g_0)$ by $(f^*H,f^*h)=f^*(H,h)$.

In summarize, we may assume that the boundary divisor $(D_0,g_0)=f^*(H,h)$. And it suffices to prove
\[
\int_{U(\CC)}c_1(\OL)^n=\TL^n
\]
for a single strongly nef adelic line bundle $\OL=(L,(X_i,\OL_i,l_i)_{i\geq1})$ on $U$. Under this assumption, the measures $c_1(\OL)^n$ and $c_1(\OL_i)^n$ on $U$ are positive.

\subsection{The proof}

Resume the notations in the last subsection. 

The following lemma constructs a smooth dsh function on $\PP^n(\CC)$. This step is inspired by \cite{DS09,GOV19,GV19}.

\begin{lemma} \label{dsh lemma}
Let $A>0$. Let $(H,h)$ be an effective arithmetic divisor on $\PP^n$ such that $h$ is smooth and psh on $U_H=\PP^n-|H|$. Then there is a smooth function $\chi_A$ on $U_H(\CC)$ such that there exist smooth positive $(1,1)$-forms $\omega^+_A$ and $\omega^-_A$ on $U_H(\CC)$ satisfying

\begin{enumerate}[(1)]
\item 
$0\leq\chi_A\leq1$,	
	
\item 
$\chi_A$ is compactly supported on $\{z\in U_H(\CC)|h(z)\leq A\}$,
	
\item 
$\d\dc\chi_A=\omega^+_A-\omega^-_A$ as differential forms,

\item 
for any closed $(n-1,n-1)$ form $S$ on $\PP^n(\CC)$, we have 
\[
\int_{U_H(\CC)}\omega^+_A\wedge S
=\int_{U_H(\CC)}\omega^-_A\wedge S
=\frac{1}{A}\int_{U_H(\CC)}\d\dc h\wedge S,
\]
where the integration is finite.
\end{enumerate}

\end{lemma}

\begin{proof}
Define $\chi:\RR_{\geq0}\to [0,1]$ by
\[
\chi(x)=
\begin{cases}
\e^{\frac{1}{x-1}+1} &  \text{if $x\in[0,1)$,} \\
0  & \text{if $x\geq1$.}
\end{cases}
\]
Then $\chi(x)$ is a smooth function such that $0\geq\chi'(x)\geq-1$ and $\chi''(x)\geq0$. Define 
\[
\chi_A=\chi\left(\frac{h(z)}{A}\right).
\]
This function satisfies condition (1) and (2).

Note that 
\[
\d\dc\chi_A=\d\dc\chi\left(\frac{h(z)}{A}\right)
=\frac{1}{A}\chi'\left(\frac{h}{A}\right)\d\dc h+\frac{1}{A^2}\chi''\left(\frac{h}{A}\right)\d h\wedge \dc h.	
\]
Here $\d\dc h$ and $\d h\wedge \dc h$ are positive. Take 
\[
\omega^+_A=\d\dc\chi_A+\frac{1}{A}\d\dc h=\frac{1}{A}\left(\chi'+1\right)\d\dc h+\frac{1}{A^2}\chi''\d h\wedge \dc h
\]
\[
\omega^-_A=\frac{1}{A}\d\dc h.
\]
Note that $\chi'> -1$ and $\chi''\geq0$. Then $\omega^+_A$ and $\omega^-_A$ are positive.	

For condition $(4)$, since $\d\dc\chi_A$ is a closed compactly supported differential form on $U_H(\CC)$, by Stokes formula,
\[
\int_{U_H(\CC)}\d\dc\chi_A\wedge S=0.	
\]	
Thus 
\[
\int_{U_H(\CC)}\omega^+_A\wedge S
=\int_{U_H(\CC)}\omega^-_A\wedge S.
\]
\end{proof}

\begin{remark}
Note that the term $\d h\wedge\dc h$ does not have finite mass on the whole $U_H$, while the forms $\omega_A^+$, $\omega_A^-$ do.
\end{remark}

We have a direct corollary.

\begin{corollary} \label{dsh lemma on varity}
	Let $A>0$. Let $X$ be a normal projective variety over $\CC$. Let $(D,g)$ be an effective arithmetic divisor on $X$. Let $U=X-|D|$. Then there are a Zariski open subset $V$ of $U$ and a smooth function $\chi_A$ on $V(\CC)$ such that there exist positive $(1,1)$-forms $\omega^+_A$ and $\omega^-_A$ on $V(\CC)$ satisfying
	
\begin{enumerate}[(1)]
\item 
$0\leq\chi_A\leq1$,	
	
\item 
$\chi_A$ is supported on a compact subset of $\{u\in V(\CC)|g(u)\leq A\}$,
		
\item 
$\d\dc\chi_A=\omega^+_A-\omega^-_A$ as differential forms,
		
\item 
for any closed $(n-1,n-1)$ form $S$ on $X(\CC)$, we have 
\[
\int_{V(\CC)}\omega^+_A\wedge S
=\int_{V(\CC)}\omega^-_A\wedge S
=\frac{C_0}{A}.
\]
Here $C_0=\int_{V(\CC)}f^*(\d\dc h)\wedge S$ is a constant independent of $A$.
\end{enumerate}
\end{corollary}

\begin{proof}
By Lemma \ref{finite morphism lemma}, there exists a finite surjective morphism $f:X\to\PP^n$ such that $f(|D|)$ is of pure dimension $n-1$. Similarly to the reduction step for boundary divisors, there exists an effective arithmetic divisor $(H,h)$ which is a section of $\overline{\CO}_{\PP^n}(d)=(\CO_{\PP^n}(d),\|\cdot\|_\sm)$ such that $D\subset f^*H$, $f^*h>g$ and that $h$ is a smooth psh function on $U_H=\PP^n-|H|$. Resume the function $\chi$ in the proof of Lemma \ref{dsh lemma}. Take 
\[
\chi_A=\chi\left(\frac{h(f(u))}{A}\right),\quad V=f^{-1}U_H.
\]
By an argument similar to Lemma \ref{dsh lemma}, this function satisfies the condition (1), (3) and (4). The constant is
\[
C_0=\int_{V(\CC)}f^*(\d\dc h)\wedge S.
\]	
For condition (2), $\chi_A$ is supported on $\{(f^*h)(u)\leq A\}$, which is a compact subset of $\{g(u)\leq A\}$.
\end{proof}

Let us come back to the setting in Section \ref{section_redn}, where $U$ is a quasi-projective variety over $\CC$ with the boundary divisor $(D_0,g_0)=f^*(H,h)$ on $X_0$, where $(H,h)=\hdiv(s_H)$, a section of $\overline{\CO}_{\PP^n}(d)=(\CO_{\PP^n}(d),\|\cdot\|_\sm)$. Let $\OL=(L,(X_i,\OL_i,l_i)_{i\geq1})$ be a strongly nef adelic line bundle on $U$, which means each $\OL_i$ is nef. Recall that the Cauchy condition is that 
\[
-\epsilon_i\cdot \phi^*_{j,0}\OD_0\leq \OD_{j,i}\leq \epsilon_i\cdot \phi^*_{j,0}\OD_0,\quad \forall j>i\geq1,
\]
where 
\[
\OD_{j,i}=\hdiv(l_jl_i^{-1})=(D_{j,i},g_{j,i})=(\div(l_jl_i^{-1}),\log(\|1\|_j/\|1\|_i)).
\]
Here $\{\epsilon_i\}_{i\geq1}$ is a decreasing sequence of positive rational numbers tending to zero. Let $\chi_A$ be the function in Corollary \ref{dsh lemma on varity} for the pair $(X_0,(D_0,g_0))$. In present situation, we have $V=U=X-f^{-1}|H|$.

\begin{proposition} \label{complex last proposition}
There is a constant $C$ independent of $i,j,A$ such that for all $j> i\geq 1$ and $A>0$, it holds that
\[
\left|\int_{U(\CC)}\chi_Ac_1(\OL_i)^n-\int_{U(\CC)}\chi_Ac_1(\OL_j)^n\right|\leq \epsilon_iC.
\]
\end{proposition}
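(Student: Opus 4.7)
The plan is to use a telescoping identity together with Bedford--Taylor integration by parts against the compactly supported test function $\chi_A$. I work on a common projective model $X_{j,i}$ dominating $X_j$, $X_i$, and $X_0$, suppressing all pullbacks via the $\phi$-maps. By multilinearity of the Monge--Amp\`ere product,
\[
c_1(\OL_j)^n - c_1(\OL_i)^n = \sum_{k=0}^{n-1}(c_1(\OL_j)-c_1(\OL_i))\wedge T_k, \qquad T_k := c_1(\OL_j)^k\wedge c_1(\OL_i)^{n-1-k},
\]
where each $T_k$ is a closed positive $(n-1,n-1)$-current because $\OL_j$ and $\OL_i$ are strongly nef. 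Using $c_1(\OL_j)-c_1(\OL_i)=c_1(\OD_{j,i})=\d\dc g_{j,i}+\delta_{D_{j,i}}$, the Dirac contribution integrates to zero against $\chi_A$ because $|D_{j,i}|\subset X_{j,i}\setminus U$ while $\chi_A$ has compact support in $U(\CC)$. Thus the desired estimate reduces to bounding each
\[
I_k := \int_{U(\CC)}\chi_A\,\d\dc g_{j,i}\wedge T_k
\]
by $\epsilon_i$ times a constant independent of $i,j,A$, and summing over $k=0,\dots,n-1$.

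For each $I_k$ I would integrate by parts to get
\[
I_k = \int_{U(\CC)} g_{j,i}\,\d\dc\chi_A\wedge T_k,
\]
which is justified because $\chi_A$ is smooth and compactly supported in $U(\CC)$, $g_{j,i}$ is continuous there, and $T_k$ is a closed positive Bedford--Taylor product. The Cauchy condition yields the pointwise bound $|g_{j,i}|\leq \epsilon_i g_0$, and since $\d\dc\chi_A$ is supported in $\{g_0\leq A\}$, this gives $|g_{j,i}|\leq \epsilon_i A$ on that support. Writing $\d\dc\chi_A=\omega_A^+-\omega_A^-$ as in Corollary~\ref{dsh lemma on varity}, the total variation of the signed measure $\d\dc\chi_A\wedge T_k$ is dominated by $(\omega_A^++\omega_A^-)\wedge T_k$. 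Since $T_k$ is closed and $\chi_A$ has compact support, $\int\d\dc\chi_A\wedge T_k=0$, and hence $\int\omega_A^+\wedge T_k=\int\omega_A^-\wedge T_k = C(T_k)/A$ with $C(T_k):=\int_{V(\CC)} f^*(\d\dc h)\wedge T_k$. Combining,
\[
|I_k|\leq \epsilon_i A\cdot \frac{2C(T_k)}{A}=2\epsilon_i C(T_k).
\]

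The last step is to bound $C(T_k)$ uniformly in $i$ and $j$. The form $f^*(\d\dc h)$ extends smoothly to the positive curvature form $\alpha := c_1(f^*\overline{\CO}_{\PP^n}(d))$ on all of $X_{j,i}(\CC)$, and $T_k$ is a positive closed current, so $C(T_k)\leq\int_{X_{j,i}(\CC)}\alpha\wedge T_k = f^*\CO_{\PP^n}(d)\cdot L_j^k\cdot L_i^{n-1-k}$, an algebraic intersection number on $X_{j,i}$. The Cauchy condition with the index $1$ gives $|D_{j,1}|,|D_{i,1}|\leq \epsilon_1 D_0$; writing $L_j=L_1+D_{j,1}$ and $L_i=L_1+D_{i,1}$ and expanding multilinearly expresses this intersection number as a finite sum whose absolute value is bounded purely in terms of $L_1$, $D_0$, $f^*\CO_{\PP^n}(d)$ and $\epsilon_1$. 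Summing over $k=0,\dots,n-1$ then produces the required uniform constant $C$.

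The principal obstacle is the integration-by-parts step: $g_{j,i}$ is only continuous (a difference of potentials of nef metrized line bundles) and $T_k$ is a Bedford--Taylor product rather than a smooth form, so one must invoke the standard pluripotential-theoretic fact that such an integration by parts against a smooth compactly supported test function remains valid. A secondary subtlety is that $\omega_A^-$ is not itself compactly supported in $U(\CC)$, but only the finite total mass $2C(T_k)/A$ of the dominating positive measure enters the final estimate, and its factor $1/A$ cancels exactly the factor $A$ in the pointwise bound on $g_{j,i}$.
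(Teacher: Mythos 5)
Your proof follows the paper's own strategy essentially step for step: telescoping the difference of Monge--Amp\`ere powers, peeling off $c_1(\OD_{j,i})=\d\dc g_{j,i}+\delta_{D_{j,i}}$, dropping the Dirac part (since $\chi_A$ has compact support in $U$), integrating by parts against $\chi_A$, using the Cauchy bound $|g_{j,i}|\le\epsilon_i g_0\le\epsilon_i A$ on $\supp\chi_A$, and then invoking Corollary~\ref{dsh lemma on varity} to cancel the factor $A$. The only genuine point of divergence is the last step, where you must show the constants $C(T_k)$ are bounded uniformly in $i,j$. The paper handles this by observing that $D_0\cdot L_i^k\cdot L_j^{n-1-k}$ converges (hence is bounded) as an adelic intersection number, via Theorem~\ref{intersection number of adelic line bundles}. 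You instead propose an explicit bound from the Cauchy condition at level $1$.

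That final step as you have written it has a real gap. Writing $L_j=L_1+D_{j,1}$, $L_i=L_1+D_{i,1}$ and ``expanding multilinearly'' produces cross terms of the form $f^*\CO_{\PP^n}(d)\cdot L_1^a\cdot D_{j,1}^b\cdot D_{i,1}^c$. For $b\ge 2$ or $c\ge 2$ these are \emph{not} controlled by the sandwich $-\epsilon_1 D_0\le D_{j,1}\le\epsilon_1 D_0$: a divisor pinched between two effective divisors need not have bounded self-intersection, since $D_{j,1}^b$ is not a positive class and the sandwich only controls pairing with nef/effective classes one factor at a time. So you cannot bound each term of the naive expansion separately. The repair is short but is not the argument you gave: use that $L_1+\epsilon_1 D_0$ is nef (here $D_0=f^*H$ is ample), that $(L_1+\epsilon_1 D_0)-L_j$ and $(L_1+\epsilon_1 D_0)-L_i$ are effective, and that $f^*\CO_{\PP^n}(d)$ is ample, to replace the factors one at a time and conclude $f^*\CO_{\PP^n}(d)\cdot L_j^k\cdot L_i^{n-1-k}\le f^*\CO_{\PP^n}(d)\cdot(L_1+\epsilon_1 D_0)^{n-1}$, a constant independent of $i,j,k$. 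With that correction, your route is a workable alternative to the paper's appeal to convergence, and has the virtue of being self-contained; the paper's version is shorter but leans on Theorem~\ref{intersection number of adelic line bundles}.
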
	

\begin{proof}
Denote $U_A=\supp(\chi_A)\subset U$. By Stokes formula,
\begin{align*}
\left|\int_{U(\CC)}\chi_Ac_1(\OL_i)^n-\int_{U(\CC)}\chi_Ac_1(\OL_j)^n\right|
&\leq \sum\limits_{k=0}^{n-1}\left|\int_{U(\CC)}\chi_A\d\dc g_{j,i}\wedge c_1(\OL_i)^k\wedge c_1(\OL_j)^{n-1-k}\right|\\
&=\sum\limits_{k=0}^{n-1}\left|\int_{U_A(\CC)}g_{j,i}\d\dc\chi_A\wedge c_1(\OL_i)^k\wedge c_1(\OL_j)^{n-1-k}\right|.
\end{align*}
By the Cauchy condition, $-\epsilon_i g_0\leq g_{j,i}\leq \epsilon_i g_0$. Thus on $U_A$, $|g_{j,i}|\leq \epsilon_iA$. By Lemma \ref{dsh lemma}, we have  	
\begin{align*}
&\ \ \ \left|\int_{U_A(\CC)}g_{j,i}\d\dc\chi_A\wedge c_1(\OL_i)^k\wedge c_1(\OL_j)^{n-1-k}\right| \\
&\leq \left|\int_{U_A(\CC)}g_{j,i}\omega^+_A\wedge c_1(\OL_i)^k\wedge c_1(\OL_j)^{n-1-k}\right|
 +\left|\int_{U_A(\CC)}g_{j,i}\omega^-_A\wedge c_1(\OL_i)^k\wedge c_1(\OL_j)^{n-1-k}\right|\\
&\leq \epsilon_iA\left(\left|\int_{U(\CC)}\omega^+_A\wedge c_1(\OL_i)^k\wedge c_1(\OL_j)^{n-1-k}\right|+\left|\int_{U(\CC)}\omega^-_A\wedge c_1(\OL_i)^k\wedge c_1(\OL_j)^{n-1-k}\right|\right)\\
&=2\epsilon_i \left|\int_{U(\CC)}\d\dc g_0\wedge c_1(\OL_i)^k\wedge c_1(\OL_j)^{n-1-k}\right|\\
&=2\epsilon_i (D_0\cdot L_i^k\cdot L_j^{n-1-k}).
\end{align*}	
For the first equality, by Corollary \ref{dsh lemma on varity}, we have 
\[
\int_{U(\CC)}\omega^+_A\wedge S
=\int_{U(\CC)}\omega^-_A\wedge S
=\frac{C_{0}}{A},
\quad C_0=\int_{U(\CC)}\d\dc g_0\wedge S.
\]
Here $S=c_1(\OL_i)^k\wedge c_1(\OL_j)^{n-1-k}$.
For the last equality, we use Theorem \ref{integration formula in proj complex case}. 

Note that the intersection numbers $D_0\cdot L_i^k\cdot L_j^{n-1-k}$ are bounded for all $j>i$ since it has the limit $D_0\cdot \widetilde{L}^{n-1}$ when $i\to\infty$. This bound is independent of $i,j$. It completes the proof.	
\end{proof}

\begin{proof}[Proof of Theorem \ref{main theorem} in the archimedean case]
Proposition \ref{complex last proposition} tells us
\[
\left|\int_{U(\CC)}\chi_Ac_1(\OL_i)^n-\int_{U(\CC)}\chi_Ac_1(\OL_j)^n\right|\leq \epsilon_iC,\  \forall j>i\geq1.
\]
Recall that we define the measure $c_1(\OL)^n$ on $U$ by the weak convergence of the measures $c_1(\OL_i)^n$. Note that $\chi_A$ is compactly supported on $U$. Let $j\to\infty$, we get 
\[
\left|\int_{U(\CC)}\chi_Ac_1(\OL_i)^n-\int_{U(\CC)}\chi_Ac_1(\OL)^n\right|\leq \epsilon_iC,\  \forall i\geq1.
\]
Note that the function sequence 
\[\left\{\chi_A(u)\right\}_{A\in\ZZ_+}\] 
is monotone increasing and converges to $1$ pointwise on $U(\CC)$ for $A\to \infty$. Since the measures $c_1(\OL)^n$ and $c_1(\OL_i)^n$ are positive on $U(\CC)$, by Lebesgue's monotone convergence theorem, we have
\[
\left|\int_{U(\CC)}c_1(\OL_i)^n-\int_{U(\CC)}c_1(\OL)^n\right|\leq \epsilon_iC,\  \forall i\geq1.
\]
By Theorem \ref{integration formula in proj complex case},
\[
\left|\int_{U(\CC)}c_1(\OL)^n-(L_i)^n\right|\leq \epsilon_iC,\  \forall i\geq1,
\]
where $(L_i)^n$ is the algebraic intersection number on $X_i$. Finally, let $i\to\infty$. By Theorem \ref{intersection number of adelic line bundles}, we have
\[
\lim\limits_{i\to\infty} (L_i)^n=\TL^n.
\]
Thus
\[
\int_{U(\CC)}c_1(\OL)^n=\TL^n.
\]
It completes the proof.
\end{proof}

\section{The proof of the non-archimedean case}

In this case, $K$ is a non-archimedean complete valuation field. Denote $O_K$ the valuation ring of $K$. We prove Theorem \ref{main theorem} in this case.

The main idea of the non-archimedean case is actually the same as in the archimedean case. The essential difference is that we use different theories of differentials.

\subsection{Differential analysis on Berkovich spaces}

In this subsection, we introduce the differential analysis theory on Berkovich spaces briefly. For the general theory on analytic spaces over a non-archimedean field, we refer to \cite{CLD12}, which is the pioneering paper. For the special case that the analytic space is an analytification of a variety, we also recommend a great reference \cite{Gub16}. Indeed, in this paper, we will only use the special case, which is technically easier than the general case.

There is a \emph{tropicalization map}
\[
\trop \colon (\Gm^n)^\an \longrightarrow \RR^n,\quad x \longmapsto (\log|z_1|_x,\cdots,\log|z_n|_x),
\]
where $z_1,\cdots,z_n$ are coordinates of $\Gm^n$. In fact, by this map, we can identify $\RR^n$ with the canonical skeleton of $(\Gm^n)^\an$ as in \cite[4.4]{Gub16}. We call $\RR^n$ the \emph{tropicalization} of $\Gm^n$. By the Bieri-Groves theorem, for a closed subvariety $Z$ of $\Gm^n$ of dimension $d$, $\trop(Z^\an)$ is a finite union of $d$-dimensional polyhedra in $\RR^n$, which is called the tropicalization of $Z$. The idea is to do analysis on the tropicalization of $Z$. We refer the Bieri-Groves theorem to \cite[Theorem A]{BG84} as the original version. For a translation into tropical language, we refer to \cite[Theorem 2.2.3]{EKL06}.  

For a variety $X$ over $K$, a Zariski open subset $V\subset X$ is called \emph{very affine}, if there is a closed embedding $V\to\Gm^N$ for some $N$. There is a conclusion saying that the very affine open subsets form a topological basis with respect to the Zariski topology of $X$. A function $f$ is called \emph{smooth} on $X^\an$, if  it is locally the pull-back of a smooth superform on $\RR^n$ with respect to some tropicalization map.

In \cite{CLD12}, authors introduce the concept \emph{real differential forms} as an analogy of differential forms in complex analysis. We also have the concept of $(p,q)$-forms in this situation. However, the theory is too enormous to illustrate it here. We will only describe the integraton of a compactly supported smooth differential form of top degree. Let $\omega$ be a top form on $X^\an$. There is a very affine open subset $V$ of $X$ such that $\omega|_V$ is the pull-back of a superform $\eta$ on $\trop(V^\an)$. Then we define the \emph{integration} by 
\[
\int_{X^\an}\omega:=\int_{\trop(V^\an)}\eta.
\]
This definition is independent of the choice of $V$. We can do the integration on $\trop(V^\an)$ since it is a subset of $\RR^n$.

\subsection{The proof}

For the non-archimedean case, we will illustrate the differences to the archimedean case and omit the same parts.

As in the archimedean case, it suffices to prove
\[
\int_{U^\an}c_1(\OL)^n=\TL^n
\]
for a single strongly nef adelic line bundle $\OL=(L,(\CX_i,\CL_i,l_i)_{i\geq1})$ on $U$ such that for each $j>i\geq0$, there is a morphism $\widetilde{\phi}_{j,i}:\CX_j\to \CX_i$ such that $\widetilde{\phi}_{j,i}|_U=\mathrm{id}_U$. Denote by $X_i=\CX_i\times_{\Spec O_K}\Spec K$ the generic fiber of $\CX_i$. Denote by $\phi_{j,i}:X_j\to X_i$ the induced morphism of generic fibers.

We may also assume that $U$ is normal and that $\CX_i$ is normal for all $i$. Note that each Cartier divisor $\CD_i$ on $\CX_i$ naturally defines an arithmetic divisor $(D_i,g_i)$ on $X_i$. The Cauchy condition is that
\[
-\epsilon_i\cdot \widetilde{\phi}^*_{j,0}\CD_0\leq \CD_{j,i}\leq \epsilon_i\cdot \widetilde{\phi}^*_{j,0}\CD_0,\quad \forall j>i\geq1.
\]
Here $\{\epsilon_i\}_{i\geq1}$ is a decreasing sequence of positive rational numbers tending to zero. It implies that
\[
-\epsilon_i\cdot \phi^*_{j,i}D_0\leq D_{j,i}\leq \epsilon_i\cdot \phi^*_{j,0}D_0,
\quad -\epsilon_ig_0\leq g_{j,i}\leq \epsilon_ig_0,
\quad \forall j>i\geq1.
\]
Thus we have inequalities of the same form as the archimedean case, which help us do the proof in the same way as the archimedean case.

Note that Lemma \ref{finite morphism lemma} still holds in the non-archimedean cases. By the lemma, there is a finite surjective morphism $f: X_0\to \PP^n_K$. Then for some sufficiently large $d$, there exists an effective section $H=\div(s_H)$ of $\CO_{\PP^n_K}(d)$ such that $H-f_*(D_0)$ is ample. In particular, $f_*(D_0)\subset H$. 

As in the complex case, take an arbitrary smooth psh metric $\|\cdot\|_{\sm}$ of $\CO_{\PP^n_K}(d)$. By \cite[Corollary 6.3.3]{CLD12}, such a metric does exist. Here a metric is said to be \emph{smooth} in the sense of \cite[Definition 6.2.4]{CLD12}, and is said to be \emph{psh} in the sense of \cite[6.3.1]{CLD12}. Then we can take $(H,h)=\hdiv(s_H)$ as in the complex case and the differential form $\d'\d'' h$ is positive. In addition, we may change $h$ by a constant such that $f^*h-g_0>0$. Then we may shrink $U$ and replace $(D_0,g_0)$ by $(f^*H,f^*h)=f^*(H,h)$.

We will state the lemma and its corollary without detailed proof. Its proof is exactly the same as in the complex case.
	
\begin{lemma} \label{dsh lemma in NA}
Let $A>0$. Let $(H,h)$ be an effective arithmetic divisor on $\PP^n_K$ such that $h$ is smooth and psh on $U_H=\PP^n_K-|H|$. Then there is a smooth function $\chi_A$ on $U_H^\an$ such that there exist positive $(1,1)$-forms $\omega^+_A$ and $\omega^-_A$ on $U_H^\an$ satisfying

\begin{enumerate}[(1)]
	\item 
	$0\leq\chi_A\leq1$,	
	
	\item 
	$\chi_A$ is compactly supported on $\{z\in U_H^\an|h(z)\leq A\}$,
	
	\item 
	$\d'\d''\chi_A=\omega^+_A-\omega^-_A$ as differential forms,
	
	\item 
	for any closed $(n-1,n-1)$ smooth differential form $S$ on $(\PP^n_K)^\an$, we have 
	\[
	\int_{U_H^\an}\omega^+_A\wedge S
	=\int_{U_H^\an}\omega^-_A\wedge S
	=\frac{1}{A}\int_{U_H^\an}\d'\d'' h\wedge S,
	\]
	where the integration is finite.
\end{enumerate}
	
\end{lemma}

\begin{proof}
	Define $\chi:\RR_{\geq0}\to [0,1]$ by
	\[
	\chi(x)=
	\begin{cases}
	\e^{\frac{1}{x-1}+1} &  \text{if $x\in[0,1)$,} \\
	0  & \text{if $x\geq1$.}
	\end{cases}
	\]
	Then we take
	\[
	\chi_A=\chi\left(\frac{h(x)}{A}\right),\quad x\in (U_H)^\an
	\]
	and 
	\[
	\omega_A^+=\d'\d''\chi_A+\frac{1}{A}\d'\d''h,\quad \omega_A^-=\frac{1}{A}\d'\d''h.
	\]
	This function satisfies desired conditions. The proof is the same as in Lemma \ref{dsh lemma}. The argument relies on Green's formula in the non-archimedean setting, which is proved in \cite[Theorem 3.12.2]{CLD12}.
\end{proof}	
	
\begin{corollary} \label{dsh lemma on varity in NA}
	Let $A>0$. Let $X$ be a normal projective variety over $K$. Let $(D,g)$ be an effective arithmetic divisor on $X$. Let $U=X-|D|$. Then there are a Zariski open subset $V$ of $U$ and a smooth function $\chi_A$ on $V^\an$ such that there exist positive $(1,1)$-forms $\omega^+_A$ and $\omega^-_A$ on $V^\an$ satisfying
	
	\begin{enumerate}[(1)]
		\item 
		$0\leq\chi_A\leq1$,	
		
		\item 
		$\chi_A$ is supported on a compact subset of $\{u\in V^\an|g(u)\leq A\}$,
		
		\item 
		$\d'\d''\chi_A=\omega^+_A-\omega^-_A$ as differential forms,
		
		\item 
		for any closed $(n-1,n-1)$ smooth differential form $S$ on $X^\an$, we have 
		\[
		\int_{V^\an}\omega^+_A\wedge S
		=\int_{V^\an}\omega^-_A\wedge S
		=\frac{C_0}{A}.
		\]
		Here $C_0=\int_{V^\an}f^*(\d'\d'' h)\wedge S$ is a constant independent of $A$.
	\end{enumerate}
\end{corollary}	

\begin{proof}
	As in the proof of Corollary \ref{dsh lemma on varity}, there exists a finite surjective morphism $f:X\to\PP^n_K$ and an effective arithemetic divisor $(H,h)$ which is a section of $\overline{\CO}_{\PP^n_K}(d)=(\CO_{\PP^n_K}(d),\|\cdot\|_\sm)$ such that $D\subset f^*H$, $f^*h>g$ and that $h$ is a smooth psh function on $U_H=\PP^n_K-|H|$. Resume the function $\chi$ in the proof of Lemma \ref{dsh lemma in NA}. Take 
	\[
	\chi_A=\chi\left(\frac{h(f(u))}{A}\right),\quad V=f^*U_H.
	\] 
	Then by an argument similar to Lemma \ref{dsh lemma in NA}, this function satisfies the conditions. The constant is
	\[
	C_0=\int_{V^\an}f^*(\d'\d'' h)\wedge S.
	\]
\end{proof}

By Corollary \ref{dsh lemma on varity in NA}, we can prove the non-archimedean version of Proposition \ref{complex last proposition}. 

\begin{proposition} \label{NA last proposition}
	There is a constant $C$ independent of $i,j,A$ such that for all $j> i\geq 1$ and $A>0$, it holds that
	\[
	\left|\int_{U^\an}\chi_Ac_1(\OL_i)^n-\int_{U^\an}\chi_Ac_1(\OL_j)^n\right|\leq \epsilon_iC.
	\]
\end{proposition}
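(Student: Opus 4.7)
The plan is to mirror the argument of Proposition \ref{complex last proposition} line by line, substituting the superform calculus $\d'\d''$ of \cite{CLD12} for the complex operator $\d\dc$ and Gubler's non-archimedean Chern form formula for Theorem \ref{integration formula in proj complex case}. The Cauchy inequality $-\epsilon_i g_0 \leq g_{j,i} \leq \epsilon_i g_0$ for local green functions carries over verbatim from the reduction preceding this proposition, so on $\supp(\chi_A)$, Corollary \ref{dsh lemma on varity in NA}(2) gives the pointwise bound $|g_{j,i}| \leq \epsilon_i A$.

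First I would telescope
\[
c_1(\OL_j)^n - c_1(\OL_i)^n \;=\; \sum_{k=0}^{n-1} \d'\d'' g_{j,i}\wedge c_1(\OL_i)^k\wedge c_1(\OL_j)^{n-1-k},
\]
and, since $\chi_A$ is smooth and compactly supported on $U^\an$, apply Stokes' formula for superforms to move $\d'\d''$ off $g_{j,i}$ onto $\chi_A$, getting
\[
\int_{U^\an}\chi_A\,\d'\d'' g_{j,i}\wedge S_k \;=\; \int_{U^\an} g_{j,i}\,\d'\d''\chi_A\wedge S_k,
\]
with $S_k := c_1(\OL_i)^k\wedge c_1(\OL_j)^{n-1-k}$.

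Next I would invoke Corollary \ref{dsh lemma on varity in NA} to split $\d'\d''\chi_A = \omega_A^+ - \omega_A^-$ with both $\omega_A^\pm$ positive, and bound each term above by $\epsilon_i A\int_{U^\an}(\omega_A^+ + \omega_A^-)\wedge S_k$. After passing to a common projective model that dominates $X_0,X_i,X_j$, the form $S_k$ extends to a closed smooth superform on that model, and a second application of Stokes, together with the shape of $\omega_A^\pm$ from Lemma \ref{dsh lemma in NA}, evaluates both integrals as $\tfrac{1}{A}\int \d'\d'' g_0\wedge S_k$. The non-archimedean analog of Theorem \ref{integration formula in proj complex case} (Chambert-Loir \cite{CL06}, Gubler \cite{Gub07}, Chambert-Loir--Ducros \cite{CLD12}) then identifies this with the algebraic intersection number $D_0\cdot L_i^k\cdot L_j^{n-1-k}$ on that model, and the factor $A$ cancels.

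Finally, the intersection numbers $D_0\cdot L_i^k\cdot L_j^{n-1-k}$ are uniformly bounded as $j > i \geq 1$ varies: by Theorem \ref{intersection number of adelic line bundles} they tend as $j\to\infty$ to $D_0\cdot L_i^k\cdot \TL^{n-1-k}$, and these in turn tend as $i\to\infty$ to $D_0\cdot \TL^{n-1}$. Absorbing the $n$ resulting terms into a single constant $C$ gives the claim. The main technical hurdle I anticipate is the bookkeeping needed to apply the analog of Lemma \ref{dsh lemma in NA}(4) with $S_k$ as test form — specifically, arranging that $S_k$ is (the pullback of) a closed smooth superform on a single projective model simultaneously carrying the boundary divisor $(\CX_0,\CD_0)$ and the metrized line bundles $\OL_i,\OL_j$, so that the two Stokes steps and the evaluation of $\int \omega_A^\pm\wedge S_k$ are all legitimate in the Chambert-Loir--Ducros framework where the usual cohomological flexibility of complex differential forms is replaced by the more rigid tropical one.
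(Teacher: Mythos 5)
Your proof proposal follows essentially the same route as the paper's: telescope $c_1(\OL_j)^n-c_1(\OL_i)^n$, move $\d'\d''$ onto $\chi_A$ by Stokes, bound $|g_{j,i}|\le\epsilon_iA$ on $\supp\chi_A$, invoke Corollary~\ref{dsh lemma on varity in NA}(4) together with the non-archimedean Chern formula (\cite[Corollary 6.4.4]{CLD12}) to reduce to $2\epsilon_i\sum_k D_0\cdot L_i^k\cdot L_j^{n-1-k}$, and bound those intersection numbers uniformly. The regularity caveat you raise about $S_k$ is the same family of issue the paper flags for $g_{j,i}$ (neither is smooth in the Chambert-Loir--Ducros sense), and the paper's suggested remedy — the current theory of \cite{CLD12} or the delta-form formalism of \cite{GK17} — is the right place to look for making the two Stokes steps rigorous.
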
	

\begin{proof}
	Denote $U_A=\supp(\chi_A)\subset U$. As in the complex case, by Stokes formula,
	\begin{align*}
	\left|\int_{U^\an}\chi_Ac_1(\OL_i)^n-\int_{U^\an}\chi_Ac_1(\OL_j)^n\right|
	&\leq \sum\limits_{k=0}^{n-1}\left|\int_{U^\an}\chi_A\d\dc g_{j,i}\wedge c_1(\OL_i)^k\wedge c_1(\OL_j)^{n-1-k}\right|\\
	&=\sum\limits_{k=0}^{n-1}\left|\int_{U_A^\an}g_{j,i}\d\dc\chi_A\wedge c_1(\OL_i)^k\wedge c_1(\OL_j)^{n-1-k}\right|\\
	&\leq 2\epsilon_iA\sum\limits_{k=0}^{n-1}\left|\int_{U^\an}\omega^+_A\wedge c_1(\OL_i)^k\wedge c_1(\OL_j)^{n-1-k}\right|\\
	&=2\epsilon_i \left|\int_{U^\an}\d\dc g_0\wedge c_1(\OL_i)^k\wedge c_1(\OL_j)^{n-1-k}\right|\\
	&=2\epsilon_i (D_0\cdot L_i^k\cdot L_j^{n-1-k}).
	\end{align*}	
	For the last equality, we use a non-archimedean version of Theorem \ref{integration formula in proj complex case}. For example, we can use \cite[Corollary 6.4.4]{CLD12}. Note that the function $g_{j,i}$ may not be smooth. To handle this, we may use the theory of approximable psh functions developed in \cite{CLD12} or the theory of delta-forms in \cite{GK17}.
	
	Note that the intersection numbers $D_0\cdot L_i^k\cdot L_j^{n-1-k}$ are bounded for all $j>i$ since it has the limit $D_0\cdot \widetilde{L}^{n-1}$ when $i\to\infty$. This bound is independent of $i,j$. It completes the proof.	
\end{proof}	
	
	According to Proposition \ref{NA last proposition}, we can prove the main theorem in the non-archimedean case in the same way as in the archimedean case.
\begin{proof}[Proof of Theorem \ref{main theorem} in the non-archimedean case]
	Proposition \ref{NA last proposition} tells us
	\[
	\left|\int_{U^\an}\chi_Ac_1(\OL_i)^n-\int_{U^\an}\chi_Ac_1(\OL_j)^n\right|\leq \epsilon_iC,\  \forall j>i\geq1.
	\]
	By the weak convergence of the measures in Theorem \ref{measure of adelic line bundles}, for $j\to\infty$, we have 
	\[
	\left|\int_{U^\an}\chi_Ac_1(\OL_i)^n-\int_{U^\an}\chi_Ac_1(\OL)^n\right|\leq \epsilon_iC,\  \forall i\geq1.
	\]
	Let $A\to \infty$, by Lebesgue's monotone convergence theorem, we have
	\[
	\left|\int_{U^\an}c_1(\OL_i)^n-\int_{U^\an}c_1(\OL)^n\right|\leq \epsilon_iC,\  \forall i\geq1.
	\]
	By \cite[Corollary 6.4.4]{CLD12}, a non-archimedean version of Theorem \ref{integration formula in proj complex case}, we have
	\[
	\left|\int_{U^\an}c_1(\OL)^n-(L_i)^n\right|\leq \epsilon_iC,\  \forall i\geq1,
	\]
	Finally, let $i\to\infty$. By Theorem \ref{intersection number of adelic line bundles}, we have
	\[
	\lim\limits_{i\to\infty} (L_i)^n=\TL^n.
	\]
	Thus
	\[
	\int_{U^\an}c_1(\OL)^n=\TL^n.
	\]
\end{proof}

\

\noindent \small{School of Mathematical Science, Peking University, Beijing 100871, China}

\noindent \small{\it Email: guoruoyi@pku.edu.cn}


\begin{thebibliography}{[AB]}

\bibitem[Ber90]{Ber90}
V. G. Berkovich, \emph{Spectral theory and analytic geometry over non-Archimedean fields}. Math. Surv. Monogr.
33, American Mathematical Society, Providence, RI, 1990.

\bibitem[BG84]{BG84}
R. Bieri, J.R.J. Groves: The geometry of the set of characters induced by valuations. J. Reine Angew. Math. 347, 168-195 (1984).

\bibitem[CGHX21]{CGHX21}
S. Cantat, Z. Gao, P. Habegger, J. Xie, \emph{The geometric Bogomolov
conjecture}. Duke Math. J. 170 (2021), no. 2, 247-277.

\bibitem[CL06]{CL06}
A. Chambert-Loir, \emph{Mesures et \'equidistribution sur les espaces de
Berkovich}. J. Reine Angew. Math. 595, 215-235 (2006).

\bibitem[CLD12]{CLD12}
A. Chambert-Loir and A. Ducros, \emph{Formes diff\'erentielles r\'eelles et courants
sur les espaces de Berkovich}. arXiv:1204.6277, 2012.

\bibitem[DGH20]{DGH20}
V. Dimitrov, Z. Gao, P. Habegger, \emph{Uniformity in Mordell-Lang for
curves}. arXiv:2001.10276, 2020.


\bibitem[DS09]{DS09}
T. Dinh and N. Sibony, \emph{Super-potentials of positive closed currents, intersection
theory and dynamics}. Acta Math., 203(1):1–82, 2009.

\bibitem[EKL06]{EKL06}
M. Einsiedler, M. Kapranov, D. Lind: Non-archimedean amoebas and tropical varieties. J. Reine Angew. Math. 601, 139-157 (2006).


\bibitem[GK17]{GK17}
W.Gubler, K.K\"{u}nnemann, 
\emph{A tropical approach to nonarchimedean Arakelov geometry}. Algebra Number Theory11(2017), no.1, 77–180.

\bibitem[GOV19]{GOV19}
T. Gauthier, Y. Okuyama, and G. Vigny. \emph{Hyperbolic components of rational
maps: quantitative equidistribution and counting}. Comment. Math. Helv., 94(2):347–398, 2019.

\bibitem[Gub07]{Gub07}
W. Gubler, \emph{Tropical varieties for non-Archimedean analytic spaces}.
Invent. Math. 169(2), 321-376 (2007).

\bibitem[Gub16]{Gub16}
W.Gubler, \emph{Forms and current on the analytification of an algebraic variety (after Chambert-Loir and Ducros)}. Nonarchimedean and tropical geometry, 1–30. Simons Symp. Springer, [Cham], 2016.

\bibitem[GV19]{GV19}
T. Gauthier and G. Vigny, \emph{The geometric dynamical Northcott and Bogomolov properties}. arXiv:1912.07907, 2019.

\bibitem[Mum99]{Mum99}
D.Mumford, \emph{The red book of varieties and schemes}.
Lecture Notes in Math., 1358. Springer-Verlag, Berlin, 1999. x+306 pp.

\bibitem[XY23]{XY23}
J.Xie and X.Yuan, \emph{Partial Heights, Entire Curves, and the Geometric Bombieri-Lang Conjecture}. arXiv:2305.14789, 2023.

\bibitem[Yua08]{Yua08}
X. Yuan, \emph{Big line bundle on arithmetic varieties}. Invent. Math. 173
(2008), no. 3, 603-649.

\bibitem[YZ21]{YZ21}
X. Yuan and S. Zhang, \emph{Adelic line bundles on quasi-projective varieties}. arXiv:2105.13587, 2021.

\bibitem[Zha95]{Zha95}
S. Zhang, \emph{Small points and adelic metrics}. J. Algebraic Geom, 4(1995), no.2, 281–300.


\end{thebibliography}
\end{document}